\documentclass[11pt]{article}

\usepackage[margin=1in]{geometry}
\usepackage[T1]{fontenc}
\usepackage{setspace}
\usepackage{amsmath,amsfonts,amssymb,mathtools}

\usepackage{amsthm}

\usepackage{times}
\usepackage[varg]{txfonts}
\usepackage{bm}

\usepackage{tikz}
\usetikzlibrary{patterns}

\usepackage{booktabs}
\usepackage{multirow}
\usepackage{graphicx}
\usepackage{longtable}
\usepackage{threeparttable}

\usepackage{enumitem}
\setlist{nosep,leftmargin=*}
\setlist[enumerate]{label=\arabic*.}
\usepackage{xcolor}
\usepackage[font=small,labelfont=bf]{caption}

\usepackage{algorithm}
\usepackage{algpseudocode}

\usepackage{hyperref}
\hypersetup{colorlinks=true,linkcolor=blue!60!black,citecolor=blue!60!black,urlcolor=blue!60!black}
\usepackage{natbib}
\newtheorem{proposition}{Proposition}

\newtheorem{remark}{Remark}

\newcommand{\appref}[1]{Appendix~\ref{#1}}

\title{\textbf{Exact and Matheuristic Methods for the\\
       Generalized Multiple Strip Packing Problem}}

\author{
Hyunwoo Lee\thanks{Corresponding author. Email: \texttt{hyunwoolee@vt.edu}}\\
{\small Grado Department of Industrial and Systems Engineering,
  Virginia Tech}
\and
Taesu Cheong\thanks{Email: \texttt{tcheong@korea.ac.kr}}\\
{\small School of Industrial and Management Engineering,
  Korea University}
}

\date{}

\begin{document}
\maketitle

\begin{abstract}\noindent
We study the Generalized Multiple Strip Packing Problem (GMSPP), in
which rectangular items are packed without overlap into open-ended
strips of different widths and an item's dimensions may depend on
the strip it is assigned to---a recently introduced model bridging
cutting and packing and scheduling on heterogeneous platforms.
While the single-strip problem possesses a mature exact toolbox,
the GMSPP has so far been approached only heuristically. We close
this gap with the first exact framework for the problem, covering
the total-area and makespan objectives at once, and an extensive
comparative study of exact and matheuristic methods. The framework
comprises a combinatorial Benders decomposition whose
variable-height master requires new $H_i$-aware conditional cuts,
which we prove valid; a packing-aware LP dual bound proven to
dominate the two previously available LP relaxations, and
empirically stronger than the scheduling relaxation of
\citet{vasilyev2023} by up to $58\%$ on tall-item families; and a
certified matheuristic
coupling adaptive large neighborhood search with exact components,
so that every run returns a machine-independent optimality
certificate. The Benders decomposition proves the first optima
reported for the GMSPP and remains the strongest method up to
$n=800$; beyond that frontier the matheuristic delivers proven
optima at $n=2532$ and $m=8$. All instances and code are released.
\end{abstract}

\section{Introduction}\label{sec:intro}

In the strip packing problem (SPP), rectangular items must be packed
without overlap into a single strip of fixed width and infinite
height, minimizing the height used. The SPP is a fundamental model in
cutting and packing---stock rolls of paper, steel, or glass cut with
minimum waste---and, read sideways, in scheduling: a strip is a bank
of processors, an item a task requiring a contiguous block of
processors for a duration. Many industrial settings, however, feature
\emph{several} strips that are not identical: rolls of different
widths run in parallel; computing clusters mix nodes of different
capacities; container terminals operate quays of different lengths.
\citet{vasilyev2023} recently formalized this setting as the
generalized multiple strip packing problem (GMSPP), in which each of
$m$ strips has its own width and an item's dimensions may depend on
the strip it is assigned to, and proposed integer formulations, two
heuristic algorithms, and a scheduling-based lower bound.

What is missing is exactness. No exact algorithm is available for the
GMSPP, so solution quality can only be read against a heuristic upper
bound and a scheduling lower bound; on mid-size instances the gap
between them exceeds $40\%$, and in places $80\%$
\citep{vasilyev2023}, with no way to tell how much of it is primal
and how much dual. By contrast, the
single-strip SPP possesses a mature exact toolbox, most notably the
combinatorial Benders decomposition BLUE of \citet{cote2014}, which packs items
into unit-width slices in a master problem and restores vertical
feasibility in a slave (``y-check''). Whether this machinery survives
the generalization to multiple heterogeneous strips---with an
assignment layer interacting with packing, per-strip height variables
replacing a fixed trial height, and strip-dependent item
dimensions---is the question this paper answers.

We answer it constructively, for two canonical objectives at once:
minimizing the total used area (the cutting-and-packing reading) and
minimizing the makespan (the scheduling reading, and the objective of
\citealt{vasilyev2023}). The paper is best read as bridging two
literatures that have not met: the sophisticated exact machinery
developed for the single strip, and the heuristic-only state of the
art for multiple heterogeneous strips. Our contributions are as
follows.

\begin{enumerate}
\item \textbf{The first exact framework for the GMSPP.} We extend
  the normal-position formulation and the y-check decomposition of
  \citet{cote2014} to multiple heterogeneous strips with
  strip-dependent item dimensions. The slave problem and all cuts are
  pure feasibility devices, so a single engine handles both
  objectives---and any objective monotone in the strip heights.
  In BLUE the strip height is a fixed trial value, and the cuts are
  valid only at that height; in the GMSPP the per-strip heights are
  decision variables, coupled through the assignment and, under the
  makespan, through the objective. The fixed-height recognition
  scheme therefore does not transfer: our master keeps the $H_i$ as
  variables, which requires a new, $H_i$-aware conditional form of
  the combinatorial Benders cuts; we prove its validity and
  transplant the interval lifting of \citet{cote2014} per strip. The resulting
  algorithm (BendM) proves, to our knowledge, the first optima
  reported for the GMSPP.
\item \textbf{Dual bounds with a complete strength map.} We
  strengthen the LP relaxation of the master with height-linking
  inequalities and prove that the resulting bound (LP-PC) dominates
  the LP relaxation of the big-$M$ formulation, strictly on some
  instances, as well as the LP relaxation of the scheduling bound of
  \citet{vasilyev2023}, whose integer version is provably
  incomparable with LP-PC. Computationally the strength map is
  regime-structured: the bounds coincide on area-dominated families,
  LP-PC wins by up to $58\%$ on the tall-item families, the regime in
  which published gaps for the GMSPP are largest, and on
  wide-strip families a trivial height bound beats both LP-based
  bounds---so we operate a three-member portfolio whose every member
  earns its place.
\item \textbf{A certified matheuristic.}
  We bring neighborhood search to this literature for the first
  time, as a \emph{matheuristic} in the sense of
  \citet{fischetti2010matheuristics}---a heuristic that delegates
  subproblems to exact optimization: adaptive large neighborhood
  search over assignments, local search, and an exact
  constraint-programming endgame that re-packs each strip, coupled
  with the bound portfolio so that every run returns a
  machine-independent optimality certificate. Beyond the reach of
  the exact methods it delivers proven optima at $n=2532$ and
  $m=8$, where the only previously available bound is off by
  $90\%$.
\item \textbf{An honest equal-budget comparison that settles the
  method question.} Under a common $1200$-second budget we compare
  the plain big-$M$ model (stated by \citealt{vasilyev2023}, who
  judge it tractable only at very small sizes), a bound-injected
  variant,
  BendM, and the matheuristic package, over both objectives and $n$
  from $20$ to $2532$. The verdict was not the one we anticipated:
  the combinatorial Benders machinery dominates---primal and
  dual---on every family up to $n=800$, while the plain big-$M$
  collapses already at $n=20$ and the package's certified gaps stay
  within $1.3$--$2.5$ percentage points of BendM's on CLASS. We report the
  resulting exact/matheuristic frontier explicitly, by size and by
  objective. Instances and code are released in full.
\end{enumerate}

The paper is organized as follows. Section~\ref{sec:related} reviews
related work. Section~\ref{sec:problem} defines the problem and makes
explicit the two modeling lenses---cutting-and-packing with fixed
item dimensions, scheduling with strip-dependent ones---that organize
our study. Section~\ref{sec:exact} develops the exact methods and
Section~\ref{sec:matheuristic} the matheuristic.
Section~\ref{sec:experiments} reports the computational study, and
Section~\ref{sec:conclusion} concludes.

\section{Related work}\label{sec:related}

\paragraph{Exact methods for single-strip packing.}
Exact algorithms for the SPP include the branch-and-bound of
\citet{martello2003}, combinatorial branch-and-bounds building
solutions item by item \citep{alvarez2009,kenmochi2009,boschetti2010,
arahori2012}, and MILP- and SAT-based approaches
\citep{castro2011,soh2010}. The state of the art
is the combinatorial Benders decomposition BLUE of \citet{cote2014}
(logic-based rather than classical LP-dual Benders): a master packs
items into unit-width slices at normal positions
\citep{herz1972,christofides1977} under column-load
constraints---the relaxation of $P|cont|C_{\max}$ (itself equivalent
to the SPP, cf.\ \citealt{bladek2015}) in which the slices of an
item need not be vertically aligned---and a slave (y-check) decides whether
vertical positions exist realizing a trial height; infeasibility is
translated into combinatorial Benders cuts \citep{codato2006},
strengthened by minimal infeasible subsets and interval lifting.
\citet{delorme2016} survey the related one-dimensional relaxations,
and the meet-in-the-middle principle of \citet{cote2018} later
refined the normal-position patterns used in such formulations.

Our exact framework is a descendant of this line;
Section~\ref{sec:exact} details what changes---and what must be
re-proven---when the height becomes a vector of decision variables
across heterogeneous strips. Our comparison of LP relaxations in
\S\ref{sec:normal} parallels a classical theme in machine scheduling,
where time-indexed formulations are known to provide stronger LP
bounds than disjunctive big-$M$ ones
\citep{dyer1990,sousa1992,queyranne1994}; to our knowledge no such
dominance result was available for two-dimensional packing
formulations, and ours hinges on a strengthening inequality that has
no single-machine counterpart.

\paragraph{Multiple strips.}
The multiple strip packing problem with identical strips was studied
mainly through the lens of approximation algorithms
\citep{zhuk2006,bougeret2010,ye2011}, motivated by scheduling parallel
jobs on platforms; polynomial algorithms with guarantees of $2$ and
$5/2$ are known, and \citet{bladek2015} clarified the relation between
contiguous and non-contiguous variants. Heuristics for heterogeneous
strips appear in \citet{vasilyev2021}. The generalized problem with
strip-dependent item dimensions and its applications---heterogeneous
cloud scheduling, berth allocation with multiple quays---are due to
\citet{vasilyev2023}, who propose a big-$M$ linearization (stated but
not computationally evaluated in their study, as they judge it
tractable only for very small instances), a level-based
restriction providing upper bounds, two heuristics (a skyline best-fit
with randomized local search, and a two-stage method assigning items
by a ``liquified'' minimum-makespan scheduling MIP), and a lower bound
from the same scheduling relaxation. Before the present paper, no
exact method and no packing-aware dual bound was available for the
GMSPP.

\paragraph{Matheuristics.}
Hybrid schemes in which a metaheuristic delegates subproblems to exact
solvers are by now standard \citep{fischetti2010matheuristics}; large
neighborhood search itself originated in constraint programming
\citep{shaw1998}, and adaptive operator selection follows
\citet{ropke2006}. ALNS has recently been applied to packing
variants such as circle bin packing \citep{he2021} and
two-dimensional packing with conflicts \citep{zeng2021}. Closest to
our setting, \citet{horn2023} apply ALNS to a shelf-restricted
multi-strip problem with strip-dependent item widths and time
windows, arising in tool coating; their packings are however
level-structured (items on shelves), and no dual bounds accompany
the heuristic. To our knowledge no neighborhood-search method has
addressed the GMSPP with free (non-level) packings, and none
returns optimality certificates. Our Phase-3 endgame---an exact CP
re-optimization of each strip under a frozen assignment---is an
instance of fix-and-optimize tailored to the two-level structure of
GMSPP (Section~\ref{sec:diagnostic}), with its division of labor
validated ex post (Section~\ref{sec:ablation}): the exactness is
spent at the packing level, where it certifies per-strip optimality
and guarantees that the final packing quality does not depend on
the heuristic packer.

\paragraph{Positioning.}
In the typology of \citet{wascher2007} the GMSPP is an open dimension
problem with multiple containers; under fixed item dimensions it can
be read as the open-dimension counterpart of variable-sized bin
packing. To our knowledge, the combination of (i) exact Benders
machinery on multiple heterogeneous open-dimension containers,
(ii) strip-dependent item dimensions, and (iii) a certified-gap
matheuristic has not previously been studied.

\section{Problem definition}\label{sec:problem}

We are given a set $\mathcal{J}=\{1,\dots,n\}$ of rectangular items and
a set $\mathcal{I}=\{1,\dots,m\}$ of strips. Strip $i$ has width $W_i$
and infinite height. Item $j$, when assigned to strip $i$, occupies a
rectangle of integer width $w_{ij}$ and integer height $h_{ij}$
\citep{vasilyev2023}. Each item must be packed exactly once into one of
the strips, without overlap and without rotation, with edges parallel
to the strip borders. An item $j$ may be packed into strip $i$ only if
$w_{ij}\le W_i$; we assume every item admits at least one such strip.
Let $H_i$ denote the height of the packing in strip $i$, i.e., the
smallest height containing all items assigned to it. We consider two
objectives:
\begin{align}
  \text{(GMSPP-A)}\quad & \min\ \textstyle\sum_{i\in\mathcal{I}} W_i H_i,
  \label{eq:obj-area}\\
  \text{(GMSPP-M)}\quad & \min\ \max_{i\in\mathcal{I}} H_i.
  \label{eq:obj-makespan}
\end{align}
Objective~\eqref{eq:obj-area} minimizes the total used area, i.e., the
trim loss summed over the strips; objective~\eqref{eq:obj-makespan}
minimizes the makespan. Both problems are strongly NP-hard, since
for $m=1$ they reduce to the strip packing problem (SPP), which is
strongly NP-hard by reduction from one-dimensional bin packing
\citep{garey1979,martello2003}.

\subsection{Two modeling lenses}\label{sec:lenses}

The general model above sits at the junction of two research
traditions, and we will keep the distinction explicit throughout.

\paragraph{Cutting-and-packing lens (fixed dimensions).}
In the cutting-and-packing literature, item dimensions are intrinsic:
$w_{ij}=w_j$ and $h_{ij}=h_j$ for all $i$. Heterogeneity then resides
solely in the containers, as in variable-sized bin packing. Under this
lens, GMSPP is a \emph{multiple strip packing problem with
heterogeneous widths}: stock rolls of different widths processed in
parallel, with objective~\eqref{eq:obj-area} measuring total material
consumption. In the typology of \citet{wascher2007}, the problem is an
open dimension problem (ODP) with several containers, each having one
variable dimension.

\paragraph{Scheduling lens (strip-dependent dimensions).}
In parallel task scheduling on heterogeneous platforms, strip $i$ is a
computing node offering $W_i$ contiguous processors, and item $j$ is a
task requiring $w_{ij}$ processors for $h_{ij}$ time units when run on
node $i$ \citep{vasilyev2023}. Dimensions naturally depend on the node:
a faster node shortens $h_{ij}$, a different architecture changes
$w_{ij}$. Berth allocation with multiple quays gives a second
interpretation, where handling time depends on the quay's equipment
while vessel length is fixed. Practically motivated dependence is
structured: \emph{speed scaling} ($w_{ij}=w_j$, $h_{ij}=h_j\tau_i$ for
node-speed factors $\tau_i$), and \emph{area-conserving reshaping}
($w_{ij}h_{ij}\approx a_j$, a task with fixed total work), which is the
Type~I setting of \citet{vasilyev2023}. Their Type~II setting draws
$(w_{ij},h_{ij})$ arbitrarily; we include it for comparability.

All methods in this paper are stated for the general strip-dependent
model; the fixed-dimension case is obtained by dropping the index $i$
from the item dimensions, and none of our algorithms require
consistency assumptions across strips.

\begin{remark}[Cost-weighted objectives]\label{rem:costs}
Our exact framework supports any objective of the form
$\sum_i C_i W_i H_i$ with per-unit-area costs $C_i>0$, of which
\eqref{eq:obj-area} is the case $C_i\equiv 1$; heterogeneous-cost
variants require no change to any component below. More
generally, Section~\ref{sec:exact} shows that the decomposition is
valid for any objective that is monotone non-decreasing in the strip
heights $(H_1,\dots,H_m)$, which covers \eqref{eq:obj-makespan} as
well. In this paper we focus on the two canonical
objectives~\eqref{eq:obj-area}--\eqref{eq:obj-makespan}.
\end{remark}

\begin{remark}[Relation between the objectives]\label{rem:relation}
Neither objective subsumes the other: \eqref{eq:obj-area} is separable
across strips, whereas \eqref{eq:obj-makespan} requires an auxiliary
maximum variable and couples the strips. The two objectives also pull
solutions in opposite directions---\eqref{eq:obj-area} concentrates
items on few strips when profitable, while \eqref{eq:obj-makespan}
balances load---so computational behavior differs markedly
(Section~\ref{sec:experiments}).
\end{remark}

\section{Exact methods}\label{sec:exact}

This section presents our exact framework. We first recall the
linearized big-$M$ formulation of \citet{vasilyev2023} and explain why
its LP relaxation is weak (\S\ref{sec:bigm}). We then introduce the
normal-position master formulation and the dual bound LP-PC
(\S\ref{sec:normal}), the lower-bound-enhanced big-$M$
(\S\ref{sec:bigmle}), and the Benders decomposition BendM
(\S\ref{sec:bendm}). Throughout, the objective is either
\eqref{eq:obj-area} or \eqref{eq:obj-makespan}; we write the models
with per-strip height variables $H_i$ and note that only the objective
row differs between the two cases.

\subsection{Linearized big-$M$ formulation}\label{sec:bigm}

% (adapted from Vasilyev et al. 2023, eqs. (1)-(12); our tightened
%  linking constraints and per-strip big-$M$ constants)
Binary $z_{ij}$ assigns item $j$ to strip $i$; continuous
$x_{ij},y_{ij}\ge 0$ give the bottom-left corner of $j$ on strip $i$;
binaries $\ell_{jk}, b_{jk}$ encode the left-of / below relations.
With $M_i=\sum_{j:\,w_{ij}\le W_i} h_{ij}$,
\begin{align}
 \min\quad & \textstyle\sum_i W_i H_i \quad\text{or}\quad
             \min\ H^{\max},\ H^{\max}\ge H_i\ \forall i \notag\\
 \text{s.t.}\quad
 & \textstyle\sum_{i:\,w_{ij}\le W_i} z_{ij}=1, && j\in\mathcal{J},
   \label{eq:bm-assign}\\
 & x_{ij}+w_{ij}\le W_i, && \forall (i,j),\label{eq:bm-fit}\\
 & y_{ij}+h_{ij}\le H_i + h_{ij}(1-z_{ij}), && \forall (i,j),
   \label{eq:bm-height}\\
 & x_{ij}\le (W_i-w_{ij})\,z_{ij},\quad
   y_{ij}\le (M_i-h_{ij})\,z_{ij}, && \forall (i,j),
   \label{eq:bm-link}\\
 & x_{ij}+w_{ij}\le x_{ik}+W_i\,(3-\ell_{jk}-z_{ij}-z_{ik}),
   && \forall (i,j,k),\label{eq:bm-nox}\\
 & y_{ij}+h_{ij}\le y_{ik}+M_i\,(3-b_{jk}-z_{ij}-z_{ik}),
   && \forall (i,j,k),\label{eq:bm-noy}\\
 & \ell_{jk}+\ell_{kj}+b_{jk}+b_{kj}=1, && j<k.\label{eq:bm-rel}
\end{align}
Constraint \eqref{eq:bm-height} uses the tight coefficient $h_{ij}$:
when $z_{ij}=0$, \eqref{eq:bm-link} forces $y_{ij}=0$ and the
constraint is vacuous, while in the LP relaxation it still yields
$H_i\ge h_{ij}z_{ij}$.

The LP relaxation of \eqref{eq:bm-assign}--\eqref{eq:bm-rel} is weak:
setting all $\ell,b$ variables to $1/4$ (say) satisfies
\eqref{eq:bm-rel} and deactivates the disjunctive constraints
\eqref{eq:bm-nox}--\eqref{eq:bm-noy} through their big-$M$ terms, so the
relaxation essentially reduces to the height cuts implied by
\eqref{eq:bm-height}. We refer to this formulation handed directly
to the MIP solver as \emph{BigM}. We quantify its weakness, and
repair it, in \S\ref{sec:bigmle}.

\subsection{Normal-position master and the LP-PC bound}
\label{sec:normal}

Following the single-strip decomposition of \citet{cote2014}, we
restrict the $x$-coordinates to \emph{normal positions}---the
dominance-based discretization of \citet{herz1972} and
\citet{christofides1977}. For strip $i$
and item $j$, let $\mathcal{W}_i(j)$ be the set of positions
$p\in\{0,\dots,W_i-w_{ij}\}$ expressible as sums of widths
$w_{ik}\,(k\ne j)$ of the other items, evaluated with the dimensions
they take on strip $i$; these sets are computed by subset-sum dynamic
programming.
Binary $x_{ijp}$ places item $j$ at position $p$ on strip $i$:
\begin{align}
 \min\quad & \textstyle\sum_i W_i H_i \quad\text{or}\quad H^{\max}
 \notag\\
 \text{s.t.}\quad
 & \textstyle\sum_{i}\sum_{p\in\mathcal{W}_i(j)} x_{ijp}=1,
   && j\in\mathcal{J},\label{eq:np-assign}\\
 & \textstyle\sum_{j}\sum_{p\in\mathcal{W}_i(j,q)} h_{ij}\,x_{ijp}
   \le H_i, && i\in\mathcal{I},\ q\in\{0,\dots,W_i-1\},
   \label{eq:np-colload}
\end{align}
where $\mathcal{W}_i(j,q)$ is the subset of positions at which item
$j$ covers column $q$. Constraints \eqref{eq:np-colload} state that
the total height of the items covering any unit-width column of strip
$i$ cannot exceed $H_i$; this is the multi-strip generalization of the
$P|cont|C_{\max}$ relaxation of \citet{cote2014}.

In the LP relaxation, a tall narrow item may spread fractionally over
many positions, diluting its column load far below its height. We
therefore add the \emph{height-linking} inequalities
\begin{equation}\label{eq:link}
  h_{ij}\textstyle\sum_{p\in\mathcal{W}_i(j)} x_{ijp} \;\le\; H_i,
  \qquad i\in\mathcal{I},\ j\in\mathcal{J}.
\end{equation}

\begin{proposition}\label{prop:link}
Inequalities \eqref{eq:link} are satisfied by every integer-feasible
solution of \eqref{eq:np-assign}--\eqref{eq:np-colload}; the optimal
integer value is unchanged, and only the LP relaxation is tightened.
\end{proposition}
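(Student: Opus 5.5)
The argument is direct and rests on how the column-load constraints \eqref{eq:np-colload} act on an integer point. Take any integer-feasible $x$ satisfying \eqref{eq:np-assign}--\eqref{eq:np-colload}, together with heights $H_i$, and fix a strip $i$ and an item $j$. By \eqref{eq:np-assign}, $\sum_{p\in\mathcal{W}_i(j)} x_{ijp}\in\{0,1\}$. I will treat the two values separately.

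\emph{Case 1: the sum is $0$.} The left-hand side of \eqref{eq:link} is then $0$. It remains to see that $0\le H_i$. This holds either because the height variables are nonnegative, as in the formulation, or because $H_i$ bounds a sum of nonnegative loads in \eqref{eq:np-colload}.

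\emph{Case 2: the sum is $1$.} Here $x_{ijp^*}=1$ for a unique $p^*\in\mathcal{W}_i(j)$. Since $p^*\le W_i-w_{ij}$ and $w_{ij}\ge 1$, item $j$ covers the column $q=p^*$, so $p^*\in\mathcal{W}_i(j,q)$. The load constraint \eqref{eq:np-colload} for this column sums nonnegative terms $h_{ik}x_{ikp}$, one of which is $h_{ij}x_{ijp^*}=h_{ij}$. Hence $h_{ij}\le H_i$, which is exactly \eqref{eq:link}. The only delicate point is confirming that a placed item covers at least one column of its strip. This uses the integrality and positivity of $w_{ij}$ and the definition of $\mathcal{W}_i(j,q)$; I expect this to be the sole step needing care, and it is routine.

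Validity then gives the remaining claims. Adding \eqref{eq:link} leaves the integer feasible set unchanged, so the integer optimum is unchanged for either objective; indeed this holds for any objective. For the LP, the feasible region can only shrink, so the bound weakly increases. To show the tightening can be strict, which is what "tightened" means here, I will sketch a small example. Take a single strip of width $W$ and one tall item of width $1$ and height $h$. The LP can place it at $x_{1,1,p}=1/W$ for every position $p$. Each column then carries load $h/W$, so \eqref{eq:np-colload} alone allows $H=h/W$. Inequality \eqref{eq:link} forces $H\ge h$, which cuts off this fractional point.
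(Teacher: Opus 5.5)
Your validity argument is the paper's own. If item $j$ sits at some $p^*$ on strip $i$, the row \eqref{eq:np-colload} for a column it covers already contains the term $h_{ij}$, so $H_i\ge h_{ij}$; otherwise the left-hand side of \eqref{eq:link} is $0$. Taking $q=p^*$ as that column is fine. The unchanged integer optimum and the weakly tightened LP follow at once, and this is all the paper's proof establishes.

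The step that fails is your strictness example. In \eqref{eq:np-assign}--\eqref{eq:np-colload}, positions are restricted to normal positions. $\mathcal{W}_i(j)$ is built from sums of widths of the \emph{other} items, and with a single item the only such sum is the empty one. So $\mathcal{W}_1(1)=\{0\}$, the LP must set $x_{1,1,0}=1$, and \eqref{eq:np-colload} for column $0$ already gives $H\ge h$. The spread-out fractional point you describe does not exist, and \eqref{eq:link} cuts off nothing.

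To repair it, add $W-1$ filler items of size $1\times 1$, as in the first instance of Remark~\ref{rem:incomparable}. Now every $p\in\{0,\dots,W-1\}$ is a normal position for every item. Uniform spreading then gives the column-load LP value $(h+W-1)/W$, while \eqref{eq:link} forces $H\ge h$. The latter is strictly larger whenever $h>1$ and $W>1$; the paper's numbers are $1.9$ versus $10$.

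Note also that the paper does not read ``only the LP relaxation is tightened'' as a strictness claim. Its proof stops at validity and leaves strict improvement to that remark.
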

\begin{proof}
If item $j$ is packed at some position $p$ on strip $i$, any column it
covers already carries load at least $h_{ij}$ in
\eqref{eq:np-colload}, so $H_i\ge h_{ij}$; otherwise the left-hand
side of \eqref{eq:link} is $0$.
\end{proof}

We denote by \emph{LP-PC} the LP relaxation of
\eqref{eq:np-assign}--\eqref{eq:np-colload} with \eqref{eq:link}. It
is solvable in seconds even for $n\approx 200$ and anchors the
dual-bound portfolio used throughout this paper
(\S\ref{sec:relax-exp}). LP-PC compares favorably with the
LP relaxation of the big-$M$ model: the only mechanism by which the
latter bounds $H_i$ from below is $H_i\ge h_{ij}z_{ij}$ from
\eqref{eq:bm-height}, because the disjunctive constraints
\eqref{eq:bm-nox}--\eqref{eq:bm-noy} are deactivated by fractional
relation variables (e.g., $\ell_{jk}=b_{jk}=1/4$ satisfies
\eqref{eq:bm-rel} while leaving slack at least $0.75\,M_i$).
Inequality \eqref{eq:link} reproduces exactly this mechanism, while
the column loads \eqref{eq:np-colload} additionally capture area
accumulation, which the big-$M$ LP misses entirely.

\begin{remark}[The linking inequalities are what dominance hinges on]
\label{rem:incomparable}
Without \eqref{eq:link}, the two LP relaxations are incomparable.
On a strip of width $10$ with one $1\times 10$ item and nine
$1\times 1$ items, the tall item spreads its mass over the ten normal
positions and the column-load LP evaluates to the area bound
$19/10=1.9$, whereas the big-$M$ LP yields $H\ge h_j z_j=10$.
Conversely, for ten $1\times 1$ items on a strip of width $2$, the
column-load LP equals $5$ while the big-$M$ LP equals $1$: fractional
relation variables ($\ell_{jk}=b_{jk}=1/4$) deactivate
\eqref{eq:bm-nox}--\eqref{eq:bm-noy}, and no mechanism accumulates
area. With \eqref{eq:link} added, the first gap closes and dominance
becomes one-sided, as we now prove.
\end{remark}

\begin{proposition}\label{prop:dominance}
For both objectives, the optimal value of LP-PC is at least the
optimal value of the LP relaxation of
\eqref{eq:bm-assign}--\eqref{eq:bm-rel}, and the dominance is strict
on some instances.
\end{proposition}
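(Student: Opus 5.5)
The plan is to map LP-PC solutions into the big-$M$ LP. Take any feasible LP-PC solution $(x,H)$ and construct a feasible solution of the LP relaxation of \eqref{eq:bm-assign}--\eqref{eq:bm-rel} with the same $H_i$ values, hence the same objective value under either objective. For the makespan objective, $H^{\max}$ is copied as well. Since the big-$M$ LP minimises over a set containing this image, its optimum is at most the LP-PC optimum.

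The construction runs as follows.
\begin{itemize}
\item Set $z_{ij}=\sum_{p\in\mathcal{W}_i(j)}x_{ijp}$. Then \eqref{eq:bm-assign} follows from \eqref{eq:np-assign}, after noting that $\mathcal{W}_i(j)$ is empty whenever $w_{ij}>W_i$.
\item Set $x_{ij}=y_{ij}=0$. This gives \eqref{eq:bm-fit} since $w_{ij}\le W_i$ on admissible pairs; for inadmissible pairs I assume the variables are simply absent, as in the formulation. It also gives \eqref{eq:bm-link} trivially.
\item Constraint \eqref{eq:bm-height} reduces to $h_{ij}\le H_i+h_{ij}(1-z_{ij})$, that is, $h_{ij}z_{ij}\le H_i$. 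This is exactly the height-linking inequality \eqref{eq:link}. This is the point where the linking inequalities are indispensable, consistent with Remark~\ref{rem:incomparable}.
\item Set $\ell_{jk}=\ell_{kj}=b_{jk}=b_{kj}=1/4$, which satisfies \eqref{eq:bm-rel}.
\end{itemize}

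The one step that needs checking is the disjunctive constraints. With $x=y=0$, constraint \eqref{eq:bm-nox} requires $w_{ij}\le W_i(3-1/4-z_{ij}-z_{ik})$. Since $z_{ij},z_{ik}\le 1$, the right-hand side is at least $0.75\,W_i\ge$\dots. This is where the $1/4$ choice fails: $0.75W_i$ may be smaller than $w_{ij}$. It is safer to exploit the freedom in the relation variables. Since $z_{ij}+z_{ik}\le 2$, I need $3-\ell_{jk}-z_{ij}-z_{ik}\ge w_{ij}/W_i$, and $\ell_{jk}\le 1$ gives only $\ge 0$ in the worst case. The rescue is that, with fractional assignment, $z_{ij}+z_{ik}$ is not close to $2$ unless both items are essentially fully on strip $i$.

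Instead I would choose $x_{ij}$ and $y_{ij}$ more cleverly. When $z_{ij}=z_{ik}=1$ in the image, the LP-PC point still has column loads, and I would set the relation variables proportionally, e.g.\ put all mass on $b_{jk}$ and $b_{kj}$ equally ($1/2$ each). Then \eqref{eq:bm-nox} has multiplier $3-0-z_{ij}-z_{ik}\ge 1$, so $w_{ij}\le W_i$ suffices with $x=0$. Constraint \eqref{eq:bm-noy} has multiplier $\ge 1/2$, so it needs $h_{ij}\le M_i/2$. That is not guaranteed in general, and I expect handling this to be the main obstacle.

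As a fallback, split each of the four relation variables adaptively per pair so that both constraints hold. Alternatively, observe that the $y$-chain can be satisfied by the standard LP trick: set $y_{ij}$ small but positive and use $M_i\ge h_{ij}+h_{ik}$, valid whenever both items are admissible. Then with $b_{jk}=b_{kj}=1/2$ the required inequality is $h_{ij}\le M_i(1/2)$, or after redistributing weight to $\ell$, $h_{ij}\le M_i\cdot\theta$ with $\theta$ tuned. Since $M_i\ge h_{ij}+h_{ik}$ and the $\ell$-side needs only $w_{ij}\le W_i\cdot(\text{slack})$, choosing $\ell_{jk}=\ell_{kj}=b_{jk}=b_{kj}=1/4$ gives multipliers $\ge 3/4$, with $0.75\,M_i$ against $h_{ij}$. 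The remaining gap is closed by nonzero $x$ and $y$ shifts: for instance, place $j$ at $x_{ij}=0$ and $k$ at $x_{ik}=(W_i-w_{ik})z_{ik}$, with checking by cases. This is the step I would verify most carefully.

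Strictness is then immediate from the second instance of Remark~\ref{rem:incomparable}: ten $1\times1$ items on a strip of width $2$ give LP-PC $=5$, since \eqref{eq:link} only adds $H\ge1$, while the big-$M$ LP gives $1$.
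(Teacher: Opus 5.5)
Your reduction is the right one and matches the paper's as far as it goes. You map an LP-PC point to a big-$M$ LP point with $z_{ij}=\sigma_{ij}$ and the same $\bar H$, and \eqref{eq:bm-height} collapses to \eqref{eq:link} when $y=0$. Your strictness argument is also correct and matches the paper's. The gap is that you never produce a valid choice of the relation and coordinate variables, and the choice you end with cannot work. With $\ell_{jk}=\ell_{kj}=1/4$, adding \eqref{eq:bm-nox} for $(j,k)$ and $(k,j)$ cancels the $x$-variables and leaves $w_{ij}+w_{ik}\le 2W_i(11/4-z_{ij}-z_{ik})$. This fails for two full-width items on a single strip (where $z=1$ is forced), whatever $x$- or $y$-shifts you apply. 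With $\ell=0$, $b=1/2$ the $y$-side fails for an item with $h_{ij}>M_i/2$, as you note. Lifting $y$ can repair \eqref{eq:bm-noy}, but then \eqref{eq:bm-height} reads $\bar H_i\ge y_{ik}+h_{ik}z_{ik}$. That is no longer \eqref{eq:link}, and your proposal says nothing about it. This is exactly where the paper's proof does its work:
\begin{itemize}
\item Since $M_i$ is the total height of the admissible items, at most one item $j^*_i$ per strip has $h_{ij^*_i}>M_i/2$.
\item Every other item is lifted by exactly the deficit $\max\bigl(0,h_{ij^*_i}-M_i(5/2-z_{ij^*_i}-z_{ik})\bigr)$, which is positive only when $z_{ij^*_i},z_{ik}>1/2$.
\item In that case $\bar H_i\ge h_{ij^*_i}z_{ij^*_i}\ge y_{ik}+h_{ik}z_{ik}$ follows from $M_i\ge h_{ij^*_i}+h_{ik}$ and $h_{ij^*_i}>h_{ik}$.
\end{itemize}

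Your adaptive-split fallback can in fact be completed with $x=y=0$ throughout, but it needs an argument you did not give.
\begin{itemize}
\item Take $\ell_{jk}=\ell_{kj}=0$. The $x$-side then holds, because the multiplier in \eqref{eq:bm-nox} is at least $1$.
\item The relation variables are shared by all strips, so a single split $b_{jk}+b_{kj}=1$ must work on every strip at once.
\item This is possible because $\sum_i(z_{ij}+z_{ik})=2$, so at most one strip $i^*$ has $s=z_{i^*j}+z_{i^*k}>1$. Both items are then admissible on $i^*$.
\item On every other strip, \eqref{eq:bm-noy} holds for any $b\in[0,1]$: the multiplier is at least $1$ and $h_{ij}\le M_i$.
\item On $i^*$ the two constraints read $b_{jk}\le 3-s-h_{i^*j}/M_{i^*}$ and $b_{kj}\le 3-s-h_{i^*k}/M_{i^*}$. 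Both bounds are nonnegative and, by $h_{i^*j}+h_{i^*k}\le M_{i^*}$, they sum to at least $6-4-1=1$, so a feasible split exists.
\end{itemize}
Then \eqref{eq:bm-height} is literally \eqref{eq:link}, which would give a shorter proof than the paper's. As written, however, the decisive step is missing.
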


\begin{proof}
Let $(\bar x,\bar H)$ be an optimal LP-PC solution and write
$\sigma_{ij}=\sum_{p}\bar x_{ijp}$. We construct a point of the big-$M$
LP with the same height vector $\bar H$, hence the same objective
value; this proves the inequality. Set $z_{ij}=\sigma_{ij}$,
$x_{ij}=0$, $\ell_{jk}=\ell_{kj}=0$ and $b_{jk}=b_{kj}=1/2$ for all
pairs, so \eqref{eq:bm-assign}, \eqref{eq:bm-fit},
\eqref{eq:bm-rel}, and the $x$-part of
\eqref{eq:bm-link}--\eqref{eq:bm-nox} hold trivially (the latter
because $3-z_{ij}-z_{ik}\ge 1$). It remains to choose the $y$
variables so that \eqref{eq:bm-height}, the $y$-part of
\eqref{eq:bm-link}, and \eqref{eq:bm-noy} hold.

Call item $j$ \emph{dominant} on strip $i$ if $h_{ij}>M_i/2$; since
$M_i$ is the total height of all items fitting strip $i$, at most one
dominant item $j^*_i$ exists per strip. With the chosen relation
variables, \eqref{eq:bm-noy} for the ordered pair $(j,k)$ on strip
$i$ reads $y_{ij}+h_{ij}\le y_{ik}+M_i(5/2-z_{ij}-z_{ik})$, whose
big-$M$ term is at least $M_i/2$. Define
\[
  y_{ik}=\max\Bigl(0,\;
    h_{ij^*_i}-M_i\bigl(5/2-z_{ij^*_i}-z_{ik}\bigr)\Bigr)
  \quad (k\ne j^*_i),\qquad y_{ij^*_i}=0,
\]
and $y\equiv 0$ on strips without a dominant item.

\emph{Constraint \eqref{eq:bm-noy}.} For $(j^*_i,k)$ the definition
of $y_{ik}$ gives exactly
$h_{ij^*_i}\le y_{ik}+M_i(5/2-z_{ij^*_i}-z_{ik})$. For $(j,k)$ with
$j\ne j^*_i$ and $y_{ij}=0$, the left side is $h_{ij}\le M_i/2$,
which is at most the big-$M$ term. For $(j,k)$ with $y_{ij}>0$,
substituting the definition of $y_{ij}$ reduces the claim to
$h_{ij^*_i}+h_{ij}\le M_i\,(5-z_{ij^*_i}-2z_{ij}-z_{ik})$, and since
the coefficient sum is at most $4$, the right side is at least
$M_i\ge h_{ij^*_i}+h_{ij}$.

\emph{Constraint \eqref{eq:bm-height}}, i.e.,
$\bar H_i\ge y_{ik}+h_{ik}z_{ik}$. For $k=j^*_i$ this is the linking
inequality \eqref{eq:link}. For $y_{ik}>0$ we have
$z_{ij^*_i},z_{ik}>1/2$ (otherwise the big-$M$ term exceeds
$h_{ij^*_i}$), and by \eqref{eq:link} it suffices to show
$h_{ij^*_i}z_{ij^*_i}\ge y_{ik}+h_{ik}z_{ik}$, equivalently
\[
  M_i\bigl(5/2-z_{ij^*_i}-z_{ik}\bigr)\;\ge\;
  h_{ij^*_i}\bigl(1-z_{ij^*_i}\bigr)+h_{ik}z_{ik}.
\]
Using $M_i\ge h_{ij^*_i}+h_{ik}$ and collecting terms, the difference
of the two sides is at least
$h_{ij^*_i}(3/2-z_{ik})+h_{ik}(5/2-z_{ij^*_i}-2z_{ik})
 \ge \tfrac12\,(h_{ij^*_i}-h_{ik})>0$,
since the dominant item satisfies $h_{ij^*_i}>h_{ik}$.

\emph{Constraint \eqref{eq:bm-link}, $y$-part.} When $y_{ik}>0$,
$y_{ik}\le h_{ij^*_i}-M_i/2\le M_i/2-h_{ik}\le (M_i-h_{ik})z_{ik}$
using $z_{ik}>1/2$.

Hence the constructed point is feasible with value equal to the
LP-PC optimum. Strictness follows from the second instance of
Remark~\ref{rem:incomparable}, on which \eqref{eq:link} is inactive
and LP-PC still equals $5$ against $1$.
\end{proof}

Proposition~\ref{prop:dominance} explains, and is confirmed by, the
computational comparison of Section~\ref{sec:experiments}. It also
clarifies the role of BigM-LE (\S\ref{sec:bigmle}): injecting the
LP-PC bound equips the big-$M$ model with the relaxation strength it
provably lacks.

\subsection{Relation to the liquified scheduling bound}
\label{sec:mmsp}

The only lower bound previously available for the GMSPP is the
scheduling relaxation of \citet[Prop.~1]{vasilyev2023}: item $j$ on
strip $i$ is ``liquified'' into $s_{ij}=w_{ij}h_{ij}/W_i$ units of
height, and the minimum makespan scheduling problem
\begin{equation}\label{eq:mmsp}
  Z^{\mathrm{ms}}=\min\Bigl\{\,H:\ \textstyle\sum_{j} s_{ij}v_{ij}\le H
  \ \ \forall i,\ \ \sum_{i} v_{ij}=1\ \ \forall j,\ \
  v_{ij}\in\{0,1\},\ v_{ij}=0 \text{ if } w_{ij}>W_i\Bigr\}
\end{equation}
satisfies $Z^{\mathrm{ms}}\le Z^*$ for the makespan objective. It is
solved as a small MIP in seconds. We note that the lower bounds
\emph{reported} by \citet[\S5.2]{vasilyev2023} are a strengthened
variant that additionally fixes $v_{ij}=0$ whenever $h_{ij}$ exceeds
the upper bound returned by their two-stage heuristic; throughout
this paper $Z^{\mathrm{ms}}$ denotes the pure relaxation
\eqref{eq:mmsp}, which depends on instance data only.

How does it relate to LP-PC? The answer splits by objective, and
the split itself is instructive. Under the \emph{total-area}
objective, liquification loses all its content: since
$W_i\,s_{ij}=w_{ij}h_{ij}$, the liquified objective collapses to
the sum of assigned item areas, so the bound degenerates to
$\sum_j\min_i w_{ij}h_{ij}$---the elementary area bound---which
LP-PC dominates by the aggregation argument of
Proposition~\ref{prop:mmsp-lp} below. It is the min--max structure
of the \emph{makespan} that leaves liquification a non-trivial
load-balancing problem; there the comparison is genuinely
two-sided and decomposes cleanly along two orthogonal axes.

\begin{proposition}\label{prop:mmsp-lp}
LP-PC dominates the LP relaxation of \eqref{eq:mmsp}.
\end{proposition}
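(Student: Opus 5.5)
The plan is to map an optimal LP-PC solution to a feasible point of the LP relaxation of \eqref{eq:mmsp} that has the same objective value. Since \eqref{eq:mmsp} is stated for the makespan, I treat GMSPP-M first. Take an optimal LP-PC solution $(\bar x,\bar H,\bar H^{\max})$ and set $v_{ij}=\sigma_{ij}=\sum_{p\in\mathcal{W}_i(j)}\bar x_{ijp}$, as in the proof of Proposition~\ref{prop:dominance}. Put $H=\bar H^{\max}$. The assignment row $\sum_i v_{ij}=1$ is exactly \eqref{eq:np-assign}. The requirement $v_{ij}=0$ when $w_{ij}>W_i$ holds because then $\mathcal{W}_i(j)=\emptyset$, so no such variables exist. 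What remains is the load constraint $\sum_j s_{ij}v_{ij}\le H$ for every strip $i$.

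The key step is an aggregation of the column-load constraints \eqref{eq:np-colload} over the columns of strip $i$. Sum \eqref{eq:np-colload} over $q=0,\dots,W_i-1$. A position $p\in\mathcal{W}_i(j)$ lies in $\mathcal{W}_i(j,q)$ for exactly $w_{ij}$ values of $q$, because the item covers columns $p,\dots,p+w_{ij}-1$, all inside the strip since $p\le W_i-w_{ij}$. So the left-hand sides add up to
\[
  \textstyle\sum_j\sum_{p} w_{ij}h_{ij}\,\bar x_{ijp}=\sum_j w_{ij}h_{ij}\,\sigma_{ij},
\]
and the right-hand sides add up to $W_i\bar H_i$. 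Dividing by $W_i$ gives $\sum_j s_{ij}v_{ij}\le \bar H_i\le \bar H^{\max}=H$. The constructed point is therefore feasible for the LP relaxation of \eqref{eq:mmsp}, with value $\bar H^{\max}$ equal to the LP-PC optimum. Hence the relaxation's optimum is at most the LP-PC optimum, which is the claim.

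For GMSPP-A I use the natural analogue of the liquified bound, $\min\{\sum_i W_i H_i:\ \sum_j s_{ij}v_{ij}\le H_i\ \forall i\}$, as discussed before the statement. The same construction with $H_i=\bar H_i$ works, because the aggregation gives the per-strip inequality directly. The resulting value $\sum_i W_i\bar H_i$ is at least $\sum_j\sum_i w_{ij}h_{ij}\sigma_{ij}\ge\sum_j\min_i w_{ij}h_{ij}$, which is the area bound.

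I expect no real obstacle; the only care needed is the counting step. Each normal position must contribute exactly $w_{ij}$ columns, which relies on $\mathcal{W}_i(j)\subseteq\{0,\dots,W_i-w_{ij}\}$. It is also worth noting that the height-linking inequalities \eqref{eq:link} are not used. The dominance therefore already holds for the plain column-load LP, consistent with the area-accumulation role of \eqref{eq:np-colload} described in Remark~\ref{rem:incomparable}.
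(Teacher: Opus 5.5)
Your proof is correct and follows the paper's argument exactly: set $v_{ij}=\sigma_{ij}$, sum the column-load constraints \eqref{eq:np-colload} over the $W_i$ columns of strip $i$ (each normal position covers exactly $w_{ij}$ columns), and divide by $W_i$. Your extra remarks are accurate additions rather than departures: the handling of $H^{\max}$, the total-area analogue, and the observation that \eqref{eq:link} is not used.
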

\begin{proof}
Let $(\bar x,\bar H)$ be LP-PC-feasible and set
$v_{ij}=\sigma_{ij}=\sum_p \bar x_{ijp}$. Each position of item $j$ on
strip $i$ covers exactly $w_{ij}$ columns, so summing the column-load
constraints \eqref{eq:np-colload} over the $W_i$ columns of strip $i$
gives $\sum_j h_{ij}w_{ij}\sigma_{ij}\le W_i\bar H_i$; dividing by
$W_i$ yields $\sum_j s_{ij}v_{ij}\le \bar H_i$, i.e., $(v,\bar H)$ is
feasible for the LP relaxation of \eqref{eq:mmsp} with the same
makespan.
\end{proof}

The proof reveals the structural relation: \emph{the column-load
constraints are a column-by-column disaggregation of the liquified
machine loads}. The two published bounds thus refine a common base in
orthogonal directions: $Z^{\mathrm{ms}}$ adds \emph{integrality} of
the assignment to the liquified base, while LP-PC adds
\emph{geometry} (positions, per-column loads, height linking).

\begin{remark}[Integrality and geometry are incomparable]
\label{rem:mmsp-incomparable}
Neither refinement dominates the other. Geometry wins on tall narrow
items: for one $1\times 10$ and nine $1\times 1$ items on a single
strip of width $10$, $Z^{\mathrm{ms}}=1.9$ while LP-PC $\ge 10$ by
\eqref{eq:link}. Integrality wins on few large items: for a single
$10\times 10$ item and two strips of width $10$,
$Z^{\mathrm{ms}}=10$ while LP-PC $=5$, the fractional assignment
splitting the item across the strips. Both bounds are computable in seconds; together with the trivial
height bound $h^*$ of \S\ref{sec:relax-exp} they form the
dual-bound portfolio used whenever a bound is needed
(\S\ref{sec:parallel}).
\end{remark}

The resulting lattice of relaxations is summarized by
\begin{equation}\label{eq:lattice}
\begin{array}{ccccc}
 & & Z^* & & \\[2pt]
 & & \text{\small master IP
   \eqref{eq:np-assign}--\eqref{eq:np-colload}} & & \\[2pt]
 \text{LP-PC \small(geometry)} & & & &
 Z^{\mathrm{ms}} \ \text{\small(integrality)} \\[2pt]
 & & \text{\small liquified LP base} & &
\end{array}
\end{equation}
read upwards by dominance: the master integer program combines both
refinements (aggregating its column loads with integral $\sigma$
recovers \eqref{eq:mmsp}), and the Benders cuts of \S\ref{sec:bendm}
close the remaining gap between the master and $Z^*$. The big-$M$ LP
sits below LP-PC by Proposition~\ref{prop:dominance}. Under the
total-area objective the right branch of the lattice is absent---the
liquified base degenerates to the area bound, as noted at the start
of this subsection, and the dual side is carried by LP-PC
alone---so we compute $Z^{\mathrm{ms}}$ for the makespan objective
only.

\subsection{Lower-bound-enhanced big-$M$ (BigM-LE)}\label{sec:bigmle}

A pragmatic repair of \S\ref{sec:bigm} is to compute LP-PC first and
add the single constraint $\text{obj}\ge \text{LP-PC}$ to the big-$M$
model before invoking the MIP solver. We call the result BigM-LE. It
requires no structural change to the model, and
Section~\ref{sec:experiments} shows it recovers most of the
terminal-gap reduction that the stronger machinery of
\S\ref{sec:bendm} achieves on small instances---though none of its
ability to close them---at a fraction of the implementation
effort. We regard
it as a practical middle ground rather than a core contribution.

\subsection{BendM: combinatorial Benders decomposition with
$H_i$-aware cuts}
\label{sec:bendm}

BendM solves the master
\eqref{eq:np-assign}--\eqref{eq:np-colload} by branch-and-cut. At each
integer solution, the assignment and positions on each strip are fixed,
and the \emph{y-check} problem of \citet{cote2014} asks, independently
per strip, whether $y$-coordinates exist realizing the tentative height.
The y-check is objective-agnostic and strip-separable: it receives only
a set of items with fixed $x$-positions and dimensions
$(w_{ij},h_{ij})$ and a target height. If some strip fails, a cut is
added; otherwise the incumbent is feasible for GMSPP.

Figure~\ref{fig:framework} illustrates the decomposition on a
two-strip example. In panel~(a), the master has chosen assignments,
normal positions, and heights $(H_1,H_2)=(3,7)$; it checks column
loads only, so items appear as vertically sliding unit-width slices
(items 4 and 5 on strip 2). In panel~(b), the y-check must realign
each item to a single $y$-interval, and strip 2 admits no such
alignment at $H_2=7$: no arrangement of items 1--9 leaves a
contiguous free space of height $3$ in column 2 (in the arrangement
shown, the free space splits into two height-$2$ buckets, and once
$p_1$ takes one unit, nothing of height $3$ remains for
$p_2$)---although every column load equals $7$ exactly. Panel~(c)
shows why the cut must be conditional rather than a no-good: the
same configuration is feasible at $H_2=8$, so the cut introduced
below prices it at $H_2\ge 8$ instead of forbidding it.

% Framework figure: master (column loads) / y-check failure / cut.
% Instance: Cote et al. (2014) Fig.-2 NP-completeness gadget with
% B=2 plus two partition items {1,3} at x=2 (verified by CP-SAT:
% all column loads = 7, min realizable height = 8).
\begin{figure}[h!]
\centering
\newcommand{\pkgitem}[6]{% x y w h fill label
  \draw[fill=#5,draw=black,line width=.4pt]
    (#1,#2) rectangle ++(#3,#4);
  \node[font=\tiny] at (#1+#3/2,#2+#4/2) {#6};}
\begin{tikzpicture}[x=.42cm,y=.30cm]
% ---------------- (a) master view ----------------
\begin{scope}
\node[font=\small\bfseries,anchor=south west] at (-1.0,8.0)
  {(a) master: column loads $\le H_i$};
% strip 1
\draw[line width=.8pt] (0,0) -- (0,4.6) (3,0) -- (3,4.6);
\draw[line width=.8pt] (0,0) -- (3,0);
\pkgitem{0}{0}{2}{2}{gray!25}{$a$}
\pkgitem{2}{0}{1}{3}{gray!45}{$b$}
\draw[dashed] (-.3,3) node[left,font=\scriptsize] {$H_1$} -- (3.3,3);
\node[font=\scriptsize] at (1.5,-1.1) {strip 1, $W_1=3$};
% strip 2 (sliced/column view)
\begin{scope}[shift={(5.5,0)}]
\draw[line width=.8pt] (0,0) -- (0,7.8) (5,0) -- (5,7.8);
\draw[line width=.8pt] (0,0) -- (5,0);
% c0
\pkgitem{0}{0}{1}{6}{gray!45}{1}\pkgitem{0}{6}{1}{1}{gray!15}{3}
% c1
\pkgitem{1}{0}{1}{1}{gray!15}{3}\pkgitem{1}{1}{1}{1}{gray!15}{4}
\pkgitem{1}{2}{1}{3}{gray!30}{6}\pkgitem{1}{5}{1}{2}{gray!60}{9}
% c2
\pkgitem{2}{0}{1}{1}{gray!15}{3}\pkgitem{2}{1}{1}{1}{gray!15}{4}
\pkgitem{2}{2}{1}{1}{gray!15}{5}
\pkgitem{2}{3}{1}{1}{orange!50}{$p_1$}
\pkgitem{2}{4}{1}{3}{orange!70}{$p_2$}
% c3
\pkgitem{3}{0}{1}{1}{gray!15}{4}\pkgitem{3}{1}{1}{1}{gray!15}{5}
\pkgitem{3}{2}{1}{3}{gray!30}{7}\pkgitem{3}{5}{1}{2}{gray!60}{8}
% c4
\pkgitem{4}{0}{1}{6}{gray!45}{2}\pkgitem{4}{6}{1}{1}{gray!15}{5}
\draw[dashed] (-.3,7) node[left,font=\scriptsize] {$H_2$} -- (5.3,7);
\node[font=\scriptsize] at (2.5,-1.1) {strip 2, $W_2=5$};
\end{scope}
\end{scope}
% ---------------- (b) y-check at H2 = 7 ----------------
\begin{scope}[shift={(14.2,0)}]
\node[font=\small\bfseries,anchor=south west] at (-1.0,8.0)
  {(b) y-check: infeasible at $H_2=7$};
\draw[line width=.8pt] (0,0) -- (0,7.8) (5,0) -- (5,7.8);
\draw[line width=.8pt] (0,0) -- (5,0);
\pkgitem{0}{0}{1}{6}{gray!45}{1}
\pkgitem{4}{1}{1}{6}{gray!45}{2}
\pkgitem{0}{6}{3}{1}{gray!15}{3}
\pkgitem{1}{3}{3}{1}{gray!15}{4}
\pkgitem{2}{0}{3}{1}{gray!15}{5}
\pkgitem{1}{0}{1}{3}{gray!30}{6}
\pkgitem{3}{4}{1}{3}{gray!30}{7}
\pkgitem{3}{1}{1}{2}{gray!60}{8}
\pkgitem{1}{4}{1}{2}{gray!60}{9}
\pkgitem{2}{1}{1}{1}{orange!50}{$p_1$}
% two height-2 buckets at column 2
\draw[pattern=north east lines, pattern color=gray!70]
  (2,2) rectangle ++(1,1);
\draw[pattern=north east lines, pattern color=gray!70]
  (2,4) rectangle ++(1,2);
\draw[dashed] (-.3,7) node[left,font=\scriptsize] {$H_2$} -- (5.3,7);
% p2 cannot fit
\pkgitem{6.2}{2.5}{1}{3}{orange!70}{$p_2$}
\node[red,font=\Large] at (6.7,1.4) {$\times$};
\node[font=\scriptsize] at (2.5,-1.1) {free space split: $2+1$};
\end{scope}
% ---------------- (c) after the cut: H2 = 8 ----------------
\begin{scope}[shift={(26.4,0)}]
\node[font=\small\bfseries,anchor=south west] at (-.6,9.2)
  {(c) cut: $H_2\ge 8$};
\draw[line width=.8pt] (0,0) -- (0,8.6) (5,0) -- (5,8.6);
\draw[line width=.8pt] (0,0) -- (5,0);
\pkgitem{0}{2}{1}{6}{gray!45}{1}
\pkgitem{4}{2}{1}{6}{gray!45}{2}
\pkgitem{0}{1}{3}{1}{gray!15}{3}
\pkgitem{1}{4}{3}{1}{gray!15}{4}
\pkgitem{2}{0}{3}{1}{gray!15}{5}
\pkgitem{1}{5}{1}{3}{gray!30}{6}
\pkgitem{3}{5}{1}{3}{gray!30}{7}
\pkgitem{3}{2}{1}{2}{gray!60}{8}
\pkgitem{1}{2}{1}{2}{gray!60}{9}
\pkgitem{2}{3}{1}{1}{orange!50}{$p_1$}
\pkgitem{2}{5}{1}{3}{orange!70}{$p_2$}
\draw[dashed] (-.3,8) node[left,font=\scriptsize] {$8$} -- (5.3,8);
\node[font=\scriptsize] at (2.5,-1.1)
  {$\bar t\sum_{j\in C}x_{2jp^s_j}-H_2\le \bar t(|C|-1)$};
\end{scope}
\end{tikzpicture}
\caption{The decomposition on a two-strip example:
\emph{(a)}~master view (column loads only; $H_1=3$, $H_2=7$);
\emph{(b)}~the y-check fails at $H_2=7$ although every column load
equals $7$ (hatched: largest remaining free space);
\emph{(c)}~the same configuration is feasible at $H_2=8$, so
cut~\eqref{eq:cut} with $\bar t=8$ prices it. The instance is the
reduction gadget of \citet[Fig.~2]{cote2014} with $B=2$ and
partition weights $\{1,3\}$; all coordinates verified by CP-SAT.}
\label{fig:framework}
\end{figure}

A structural difference from \citet{cote2014} deserves emphasis.
Their algorithm BLUE solves a fixed-height recognition problem
SPP($L$) and searches over the scalar $L$ externally; its cuts are
therefore height-free no-good cuts, valid because the height at
which infeasibility is detected is exactly the height being tested.
In the GMSPP no such scalar exists: the strips' heights are
interdependent decision variables, coupled through the assignment
and, under the makespan, through the objective row. BendM therefore
keeps the $H_i$ in the master, and the height must travel with the
cut itself, in the following \emph{$H_i$-aware} conditional form.
Let
$C\subseteq\mathcal{J}$ be a set of items on strip $i$ at positions
$p^s_j$ whose y-check is infeasible for target height $t$, and let
$\bar t=t+1$. The cut
\begin{equation}\label{eq:cut}
  \bar t \textstyle\sum_{j\in C} x_{ijp^s_j} - H_i
  \;\le\; \bar t\,\bigl(|C|-1\bigr)
\end{equation}
states that if all items of $C$ are placed at these positions, then
$H_i\ge \bar t$.

\begin{proposition}\label{prop:cut}
Inequality \eqref{eq:cut} is valid for
\eqref{eq:np-assign}--\eqref{eq:np-colload} under either objective
\eqref{eq:obj-area} or \eqref{eq:obj-makespan}.
\end{proposition}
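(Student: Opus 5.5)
The plan is to show that \eqref{eq:cut} removes no master solution that corresponds to a feasible GMSPP packing with the stated heights. Validity here means: every solution $(x,H)$ of \eqref{eq:np-assign}--\eqref{eq:np-colload} that is \emph{realizable} satisfies \eqref{eq:cut}. Realizable means that on every strip $i$ the items with $x_{ijp}=1$ admit $y$-coordinates at their chosen normal positions with all tops at most $H_i$. By the normal-position argument of \citet{herz1972,christofides1977} and \citet{cote2014}, every optimal GMSPP packing can be shifted left into normal positions without increasing any $H_i$. Hence some optimal solution of either objective is realizable, and any inequality satisfied by all realizable points preserves an optimum.

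The core is a case split on $s:=\sum_{j\in C}x_{ijp^s_j}\in\{0,\dots,|C|\}$ at an integer point.
\begin{itemize}
\item If $s\le |C|-1$, the left-hand side is at most $\bar t(|C|-1)-H_i$. This is at most the right-hand side because $H_i\ge 0$.
\item If $s=|C|$, every item of $C$ sits on strip $i$ at position $p^s_j$, and the cut reduces to $H_i\ge\bar t$.
\end{itemize}
In the second case, suppose $H_i\le t$. Take the realizing $y$-coordinates for all items on strip $i$ and restrict them to $C$. This gives a non-overlapping placement of $C$ at positions $p^s_j$ with all tops at most $H_i\le t$, so the y-check for $(C,p^s,t)$ would be feasible. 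That contradicts the hypothesis. Therefore $H_i>t$, and since all $h_{ij}$ are integers the packing height is an integer, so $H_i\ge t+1=\bar t$.

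Two points need care. \emph{Monotonicity:} y-check feasibility at height $t$ implies feasibility at every $t'\ge t$, and infeasibility for $C$ implies infeasibility for any superset. This is what makes restriction to $C$ legitimate and lets the cut hold whatever other items lie on the strip.

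\emph{Integrality of $H_i$:} the master's $H_i$ may be continuous. The fix is to note that at an optimum of either objective we may take $H_i$ equal to the actual packing height, which is integral. Both objectives are monotone non-decreasing in $H$ (Remark~\ref{rem:costs}), so rounding any $H_i$ down to the integral packing height never worsens the objective.

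Because the cut only constrains $(x,H_i)$ and never the objective row, the argument is identical for \eqref{eq:obj-area} and \eqref{eq:obj-makespan}. For the makespan, $H^{\max}\ge H_i$ is unaffected.

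I expect the main obstacle to be stating validity precisely. The cut is \emph{not} valid for every integer point of the master polytope, since it is meant to cut off master solutions that are not realizable. So the proposition must be read as ``valid for all solutions of the master that correspond to feasible GMSPP packings, hence the optimal value is preserved.'' I would state this reading explicitly before the case analysis.
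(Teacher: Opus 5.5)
Your proof is correct and follows the paper's own argument. It uses the same case split on whether every item of $C$ sits at $p^s_j$ on strip $i$, the same reduction of the tight case to $H_i\ge\bar t$ through y-check infeasibility, and the same observation that the objective row is never involved. Your additions make explicit what the paper's short proof leaves implicit: reading validity as validity over realizable, normal-position master solutions; the restriction-to-$C$ step; and the integrality step. That last step is cleanest stated as ``bottom-justifying a strip makes every $y$ a sum of integer heights, so a packing that fits within some $H_i<t+1$ already fits within $t$'', rather than as a claim that the actual packing height is integral.
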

\begin{proof}
If $\sum_{j\in C} x_{ijp^s_j}\le |C|-1$, the left-hand side is at most
$\bar t(|C|-1)$ since $H_i\ge 0$. Otherwise all items of $C$ are packed
at positions $p^s_j$ on strip $i$; by infeasibility of the y-check at
height $t$, any feasible packing of strip $i$ has height at least
$\bar t$, and the inequality reduces to $H_i\ge\bar t$. The argument
involves only feasibility of strip $i$ and the variable $H_i$, hence is
independent of the objective row.
\end{proof}

Intuitively, \eqref{eq:cut} is a conditional no-good cut: it does not
forbid the item--position combination outright (a taller strip may
accommodate it) but prices it at $H_i\ge\bar t$. Two strengthening
steps follow, both applied per strip. First, $C$ is reduced to a
\emph{minimal infeasible subset} (MIS)---a smallest set of items
that, kept at their positions, still admits no feasible
stacking---by greedily removing items while re-running the y-check
on the remainder.

Second, the surviving fixed positions are relaxed
to intervals by an LP adaptation of the conflict-graph lifting of
\citet[\S4.2]{cote2014}. Let
$\mathcal{K}(j)=\{k\in C\setminus\{j\}:\ j \text{ and } k \text{
share a column at the positions } p^s\}$. The intervals
$[\ell_j,r_j]\ni p^s_j$ are computed by the linear program
\begin{equation}\label{eq:lift-lp}
  \max\ \sum_{j\in C}(r_j-\ell_j)
  \quad\text{s.t.}\quad
  \ell_j + w_{ij} \ge r_k + 1
  \ \ \forall j\in C,\ k\in\mathcal{K}(j),
  \qquad 0\le \ell_j\le p^s_j\le r_j\le W_i-w_{ij},
\end{equation}
whose conflict constraints ensure that item $j$ at its
\emph{leftmost} position still overlaps item $k$ at its
\emph{rightmost} position, for every originally conflicting pair.
The LP has $2|C|$ variables and $O(|C|^2)$ constraints and solves
in negligible time.

For any choice of positions within the
intervals, every originally conflicting pair therefore still shares
a column---the conflict graph can only gain edges---so the y-check
infeasibility at height $t$ is preserved by the argument of
\citet[\S4.2]{cote2014}, which we do not restate.
Proposition~\ref{prop:cut} then applies verbatim with each term
$x_{ijp^s_j}$ in \eqref{eq:cut} replaced by
$\sum_{p\in[\ell_j,r_j]\cap\mathcal{W}_i(j)} x_{ijp}$: a single
inequality excludes every position combination within the
intervals.

Symmetry among interchangeable strips (equal width and, in the
strip-dependent case, identical dimension columns) is broken by
$H_a\ge H_b$ orderings within each group.

Algorithm~\ref{alg:bendm} summarizes BendM.

\begin{algorithm}[h!]
\caption{BendM: combinatorial Benders decomposition for the GMSPP}
\label{alg:bendm}
\begin{algorithmic}[1]
\State build the master
  \eqref{eq:np-assign}--\eqref{eq:np-colload} with the linking
  inequalities \eqref{eq:link} and symmetry-breaking orderings
\State solve the master by branch-and-cut; at each integer
  incumbent $(\hat x,\hat H)$:
\For{each strip $i$ with assigned items $C_i$ at positions $p^s$}
  \State run the y-check for $(C_i, p^s)$ at target height
    $\hat H_i$
  \If{infeasible}
    \State reduce $C_i$ to a minimal infeasible subset (greedy
      removal with y-check)
    \State lift the surviving positions to intervals via the LP
      \eqref{eq:lift-lp}
    \State add the $H_i$-aware conditional cut \eqref{eq:cut}
      (lifted form) and reject the incumbent
  \EndIf
\EndFor
\State \Return the best y-check-feasible incumbent and the master
  bound at the time limit
\end{algorithmic}
\end{algorithm}

\paragraph{Implementation note.}
Our y-check implements the niche-based enumeration of
\citet[\S3.2]{cote2014} with its bounding and dominance criteria,
in Python accelerated by Numba; its design was informed by the
original C implementation kindly shared by J.-F.~C\^ot\'e. The
1CBP-based pruning of the original is omitted, while joint
feasibility verification over lifted intervals and a MIP fallback
are added. All cross-method comparisons in this paper are internal,
so no conclusion depends on matching the speed of the original
implementation.

\section{A three-phase matheuristic}\label{sec:matheuristic}

This section presents the second solution approach of this paper: a
matheuristic that couples adaptive large neighborhood search over
item-to-strip assignments with exact optimization components---a
per-strip constraint-programming endgame and the dual-bound
portfolio---so that every run, at any instance scale, returns a
solution together with a machine-independent optimality
certificate. Its design follows a structural division of labor
(\S\ref{sec:diagnostic}): the entire search effort is spent on the
\emph{assignment} level, while the per-strip \emph{packing} level
is delegated---to a fast constructive packer inside the search, and
to an exact solver once at the end. Accordingly, the algorithm
separates the two levels: an adaptive large neighborhood search (ALNS)
explores assignments (\S\ref{sec:alns}), a local search polishes them
(\S\ref{sec:ls}), and a constraint-programming endgame re-optimizes
each strip's packing exactly under the final assignment
(\S\ref{sec:endgame}). We refer to the complete pipeline---the
three search phases together with the dual-bound portfolio---as
\emph{the package}; it outputs a solution together with a certified
optimality gap, without any branch-and-bound on the full problem.

\subsection{The two-level division of labor}\label{sec:diagnostic}

A GMSPP solution has two levels: the assignment of items to strips,
and the packing within each strip. The two are structurally
asymmetric. Given an assignment, the packing decomposes into
independent single-strip SPPs of moderate size ($10$--$50$ items
per strip on our instances)---subproblems for which strong
constructive packers exist and which an exact solver can close
outright. The assignment, by contrast, couples all strips through
the objective and, in the strip-dependent model, changes every
item's dimensions; no comparably strong local component is
available for it.

The design conclusion is a division of labor with
two regimes dictated by call frequency: \emph{inside} the search,
where packings are evaluated millions of times, each strip is
packed by a fast constructive packer; \emph{at the end}, once, each
strip's packing is re-optimized by an exact solver, which either
improves it or certifies its optimality. The search itself
therefore only ever manipulates assignments. The computational
study validates this division ex post: the packings produced by the
fast packer and local search turn out to be per-strip optimal on
every family and at every budget we test
(Section~\ref{sec:ablation} and \appref{app:zerowaste}), so
the exact endgame acts as certifier and safety net, and the
residual deficit of the package at the overall level is
attributable to the assignment search alone
(Section~\ref{sec:campaign}). This division motivates Phases~1--3
below.

\subsection{Phase 1: ALNS over assignments}\label{sec:alns}

A solution is represented by the assignment vector only; the packing
of each strip is delegated to a fast constructive packer. We maintain
per-strip height maps and evaluate insertions incrementally.

\paragraph{Per-strip packer.}
Each strip is packed by the better of (i) the best-fit skyline
heuristic of \citet{burke2004}, which repeatedly fills the lowest gap
with the widest fitting item, and (ii) bottom-left-fill under a
height-decreasing order. Within the search loop this \emph{light}
packer is used; whenever a new best solution is found, a \emph{strong}
variant additionally tries width- and area-decreasing and randomly
shuffled orders. All dimensions are those the items take on the strip
at hand, so strip-dependent instances are handled transparently.

\paragraph{Destroy and repair operators.}
The operator pool is deliberately the standard adaptive-LNS
portfolio \citep{ropke2006}, adapted to the assignment
representation. Four destroy operators remove a fraction
$\delta\sim U[\delta_{\min},\delta_{\max}]$ of the items: uniformly at
random; by largest objective contribution (with multiplicative noise);
by similarity to a random seed item (Shaw removal); or by emptying one
strip chosen with probability proportional to its contribution. Four
repair operators reinsert the removed items: greedy minimum-cost
insertion; regret-2 insertion; uniformly random feasible strip; and
round-robin spreading. Operator selection uses roulette-wheel weights
updated every $100$ iterations as in \citet{ropke2006}, and
acceptance follows simulated annealing whose temperature cools
geometrically in wall-clock time from $5\%$ to $0.01\%$ of the
initial objective value, so the schedule adapts to any budget. We
use $(\delta_{\min},\delta_{\max})=(0.10,0.40)$; the local-search
and endgame phases receive fixed budgets of $10$ and $30$ seconds.

\paragraph{Objective surrogates.}
For objective \eqref{eq:obj-area} the insertion cost of item $j$ on
strip $i$ is $W_i\Delta H_i$. For \eqref{eq:obj-makespan} this
degenerates---insertions on non-bottleneck strips have zero
cost---so we use the lexicographic surrogate
$\kappa\cdot\Delta H^{\max} + W_i \Delta H_i$ with a large constant $\kappa$,
which prefers not to raise the makespan and, among such insertions,
balances load. For the same reason, solution comparisons throughout
the search use the lexicographic key
$(H^{\max},\ \sum_i W_i H_i)$ under \eqref{eq:obj-makespan}.

\subsection{Phase 2: local search}\label{sec:ls}

The best solution found by ALNS is polished by a first-improvement
local search over two neighborhoods: relocating a single item to
another feasible strip, and swapping two items assigned to different
strips. Moves are evaluated by repacking the two affected strips with
the light packer and compared with the lexicographic key above. The
phase runs within a small time budget and applies the fine-grained
moves that the coarser destroy--repair steps miss; on large
instances it carries most of the late improvement
(Section~\ref{sec:ablation}).

\subsection{Phase 3: exact endgame}\label{sec:endgame}

Finally, the assignment is frozen and the packing of each strip is
re-optimized exactly: minimize the strip height subject to
non-overlap---i.e., the single-strip SPP on the items assigned to
that strip. The endgame is agnostic to which exact SPP method fills
this role; we use the CP-SAT solver of OR-Tools via its
two-dimensional no-overlap propagator, which is robust and
tuning-free at the subproblem sizes arising here ($10$--$50$
items).

C\^ot\'e-style machinery is not the natural fit for this
slot: the y-check alone does not suffice, since it decides
feasibility for \emph{fixed} $x$-positions whereas the endgame
re-optimizes both coordinates, and the full master--slave apparatus
of Section~\ref{sec:exact} restricted to one strip carries
model-building and cut-generation costs that a monolithic CP solver
avoids on subproblems it closes in seconds. The assignment of exact
components is thus symmetric in the call profile: inside BendM, the
y-check---invoked at every incumbent and repeatedly during MIS
reduction and lifting, in microseconds, with its fixed-position
structure feeding the cuts---is the right oracle; in the endgame,
invoked once per run on a free-coordinate optimization problem,
CP-SAT is.

Strips whose current height already equals the per-strip lower bound
$\max\{\lceil \sum_j w_{ij}h_{ij}/W_i\rceil, \max_j h_{ij}\}$ are
skipped. Under \eqref{eq:obj-makespan} strips are processed
bottleneck-first, re-selecting the tallest strip after each
improvement; under \eqref{eq:obj-area} they are processed by
decreasing height-to-bound gap. The endgame runs once, at the
end of the search, within a fixed time budget; in the parallel
setting it is applied to a few best candidate assignments
(\S\ref{sec:parallel}).

This phase is what makes the algorithm a matheuristic in the sense of
model-based metaheuristics: the neighborhood search never needs to
find good packings, only good assignments, and the exact solver is
invoked on subproblems whose size ($10$--$50$ items, one strip) is
well within its capabilities. The design mirrors the origin of large
neighborhood search in constraint programming \citep{shaw1998}.

\subsection{One algorithm, two objectives}\label{sec:touchpoints}

As with the exact framework, a single implementation serves both
objectives, because both are monotone in the strip heights and the
packing level therefore always benefits from lower strips. The
objective touches the algorithm in exactly five places: the
evaluation function; the insertion surrogate (\S\ref{sec:alns}); the
lexicographic comparison key (\S\ref{sec:ls}); the contribution
proxies of the \emph{worst} and \emph{strip} destroy operators; and
the strip-selection order of the endgame (\S\ref{sec:endgame}).
Everything else---representation, packer, operator pool, acceptance,
parallelization---is shared verbatim, so neither objective is served
by a specialized procedure.

\subsection{Parallel multi-start and certified gaps}
\label{sec:parallel}

Independent restarts with different seeds are embarrassingly parallel:
$r$ runs execute Phases~1--2 on separate cores, and the endgame is
then applied to the $K$ best solutions with \emph{distinct}
assignments (we use $r=10$, $K=3$): a slightly worse assignment may
repack strictly better, and spreading the single exact phase over a
few distinct candidates recovers such cases.
Algorithm~\ref{alg:package} summarizes the complete pipeline. The final output is the
solution value together with the certified gap to the best available
dual bound: under the makespan objective, the maximum of LP-PC, the
scheduling bound $Z^{\mathrm{ms}}$ of \S\ref{sec:mmsp}, and the
trivial height bound $h^*=\max_j\min_i h_{ij}$
(Section~\ref{sec:relax-exp}); under the total-area objective, LP-PC
alone, which dominates the elementary area bound
(\S\ref{sec:mmsp}). All members are computable in seconds and are
evaluated once in the master process. \begin{algorithm}[h!]
\caption{The certified matheuristic package}
\label{alg:package}
\begin{algorithmic}[1]
\State compute the dual bound $L \gets \max(\text{LP-PC},\,
  Z^{\mathrm{ms}},\, h^*)$ \Comment{portfolio, \S\ref{sec:relax-exp}; members as applicable}
\For{$k = 1,\dots,r$ \textbf{in parallel}}
  \State \textbf{Phase 1 (ALNS):} destroy--repair over assignments
    with SA acceptance until the phase budget elapses
  \State \textbf{Phase 2 (LS):} first-improvement relocate/swap on
    the run's best solution
\EndFor
\State select the $K$ best solutions with distinct assignments
\For{each selected solution}
  \State \textbf{Phase 3 (endgame):} per strip, re-optimize the
    packing with CP-SAT (skip strips at their per-strip bound)
\EndFor
\State \Return best solution $z$ and certified gap
  $(z - \lceil L\rceil)/z$
\end{algorithmic}
\end{algorithm}

We emphasize that the certificate
is machine-independent: it compares two numbers---a feasible value and
a relaxation optimum---and involves no time-limited branch-and-bound.

\section{Computational study}\label{sec:experiments}

% ================= 6.1 Setup =================
\subsection{Setup and fairness protocol}\label{sec:setup}

All algorithms are implemented in Python~3.11; the computationally
intensive kernels run in native code (Gurobi~13.0 for linear and
integer programs, OR-Tools CP-SAT for packing subproblems, NumPy and
Numba for the packing heuristics and the y-check). Experiments were
run under Windows on an Intel Core i9-13900F (3.10\,GHz, 24 cores)
with 64\,GB of RAM. Every Gurobi solve (the BigM and BigM-LE MIPs, the
BendM master, and the scheduling bound \eqref{eq:mmsp}) is capped at
16 threads; the package runs 10 single-threaded search
workers in parallel and its CP-SAT endgame may use all cores within
its 30-second budget.

Comparisons follow a three-tier protocol designed so that no claim
depends on hardware or time-budget asymmetries.
\emph{Tier 1 (relaxation strength):} lower bounds are compared as
optimal values of their relaxations---machine- and budget-independent
quantities. \emph{Tier 2 (certified gaps):} the matheuristic package
reports the gap between its solution and the best dual bound, again a
comparison of two numbers free of branch-and-bound budgets.
\emph{Tier 3 (equal-budget comparison):} all methods in a table
receive the same wall-clock budget $T=1200$~s and thread cap,
adopting the per-instance budget of \citet{cote2014}. We make no
cross-implementation speed claims. Since all instance data are
integer, both objectives take integer values; in Tier~2 every dual
bound is therefore rounded up to the nearest integer before gaps are
computed, uniformly across all bounds (Tier~1 reports the raw
relaxation optima, matching the definitions of the compared bounds).

\paragraph{Comparison and reimplementation policy.}
We compare against prior methods through their \emph{published}
results on reproducible instances, and we reimplement only components
whose outcome is independent of implementation quality. Concretely,
the scheduling bound \eqref{eq:mmsp} of \citet{vasilyev2023} is an
integer program whose optimal value does not depend on who implements
it, so we recompute it on every instance; their skyline best-fit
heuristic with randomized local search, by contrast, is an engineered
search procedure whose performance depends on scoring, restart, and
stopping details, and a reimplementation would be open to objections
from both directions.

We note that the skyline idea itself is not
absent from our method: the per-strip packer of
Section~\ref{sec:alns} follows the best-fit skyline principle of
\citet{burke2004}, which \citet{vasilyev2023} also build on; the two
approaches differ at the assignment-search level, where we use ALNS
with an exact endgame instead of randomized swaps. We also note that
their skyline procedure targets the makespan alone, whereas every
method in this study handles both objectives. Finally, our
quality claims for the matheuristic do not rest on
heuristic-to-heuristic comparison at all: they are certificates against dual
bounds (Tier~2), which require no external baseline.

% ================= 6.2 Instances =================
\subsection{Instances}\label{sec:instances}

Table~\ref{tab:instances} summarizes the five instance families and
their roles; the remainder of this subsection gives the rationale
for each. All tables in this section report aggregates; the
per-instance result files (CSV) underlying every table are included
in the code and data release.

\begin{table}[h!]
\centering\small
\caption{Instance families at a glance.}
\label{tab:instances}
\begin{tabular}{llrrll}
\toprule
Family & Dimensions & $n$ & $m$ & Objectives & Role\\
\midrule
CLASS & fixed & 20--100 & 2--3 & both &
  primary (Tiers 1--3)\\
Type I/II & strip-dep. & 150--800 & 2--4 & makespan &
  scheduling lens (Tiers 1--3)\\
ZDF splits & strip-dep. & 145--766 & 2--4 & makespan &
  scheduling lens (Tiers 1--3)\\
ZDF stress & strip-dep. & 1258--2532 & 2--8 & makespan &
  scale (Tiers 1--3)\\
zero-waste & fixed & 17--197 & 2--3 & both &
  calibration only (\appref{app:zerowaste})\\
\bottomrule
\end{tabular}
\end{table}

Our primary fixed-dimension family derives from the CLASS collection
of Berkey--Wang and Martello--Vigo (500 SPP instances in ten classes
with distinct item-shape distributions), the main test bed of
\citet{cote2014}. We use classes 1--10 with
$n\in\{20,60,100\}$, two instances per group, converted to GMSPP
under both objectives by the following deterministic strip
protocol: an SPP instance of width $W$ becomes
$m=2$ strips of widths $\{W,\,\mathrm{round}(1.2W)\}$ or $m=3$
strips of widths
$\{\mathrm{round}(0.8W),\,W,\,\mathrm{round}(1.2W)\}$. The widths
straddle the original $W$, which creates genuine width heterogeneity
while guaranteeing feasibility (every item fits the original width)
and keeping the conversion reproducible without random draws.

We deliberately exclude the zero-waste instances of
\citet{burke2004} from the main comparisons. As \citet[p.~657]{cote2014}
already observed for the single-strip case, these instances are
``built ad hoc to have zero waste,'' so that ``the computation of
sophisticated lower bounds is useless''; in our preliminary runs every
relaxation of the lattice \eqref{eq:lattice} collapsed to the area
bound on them (identical values on 57 of 70 conversions). The
multi-strip setting adds a second degeneration: under the total-area
objective, the optimal solution packs all items into the original
strip with zero waste and leaves the remaining strips empty, so the
assignment dimension is vacuous at the optimum. Zero-waste instances
remain an interesting stress test for primal heuristics---there the
area bound \emph{equals} the optimum, so certified gaps measure true
suboptimality exactly---and we report them in that role only.
% (zero-waste stress test: Appendix A)

The scheduling lens is covered by regenerated Type~I and Type~II
instances following the generation protocol of \citet{vasilyev2023}
and by splits of the ZDF collection according to their CX/ZDF
protocol; since neither the strip widths nor the shuffle seeds of
their instances are specified in the paper, we document our protocol
and release all instances. Strip widths are drawn uniformly at
random (with fixed seeds) for Type~I/II, matching their described
procedure; for the ZDF splits we use the original width for every
strip, the neutral choice given that theirs are unspecified---there
the heterogeneity enters entirely through the strip-dependent
dimensions, making these instances the purest expression of the
scheduling lens of Section~\ref{sec:lenses}.

Their published lower bounds are not
reproducible from instance data alone for a second reason: as noted
in \S\ref{sec:mmsp}, they incorporate variable fixings derived from
their heuristic's upper bound, whereas we recompute the pure
relaxation \eqref{eq:mmsp}. We contacted the authors for the
original instance files; as these were not available by the time of
writing, all comparisons use the regenerated
instances above, which we release in full.

% ================= 6.3 Tier 1: relaxation strength =================
\subsection{Relaxation strength}\label{sec:relax-exp}

Table~\ref{tab:relax} reports, for every makespan instance, the four
members of the lattice \eqref{eq:lattice}: the big-$M$ LP, the
liquified LP base, Vasilyev's scheduling bound $Z^{\mathrm{ms}}$, and
LP-PC, together with the trivial combinatorial bound
$h^*=\max_j\min_i h_{ij}$ (every item forces the height of the strip
that receives it, so the makespan is at least $\min_i h_{ij}$ for
each $j$; the bound is the strip-dependent generalization of the
maximum-item-height bound classical in strip packing).

The table is restricted to the makespan because that is the only
objective on which the bound ranking is genuinely open: under total
area, the big-$M$ LP and the liquified analogue are dominated by
Propositions~\ref{prop:dominance} and~\ref{prop:mmsp-lp}, and the
one candidate not dominated in general---the trivial cost bound
$\max_j\min_i C_iW_ih_{ij}$, the total-area analogue of
$h^*$---turns out to be strictly weaker than LP-PC on all $120$
total-area CLASS instances (splitting an item across strips dilutes
a \emph{maximum} of heights but not a \emph{sum} of
width-weighted heights, so the mechanism that lets $h^*$ win under
the makespan has no total-area counterpart on strips of similar
widths). LP-PC is the unique best total-area bound throughout the
study.

\begin{table}[h!]
\centering\small
\caption{Tier 1 --- relaxation strength on all makespan instances.
Columns 3--7: mean percentage below the best bound of the portfolio
(0.0 = this bound is the best on every instance of the row).
Last column: largest gain of LP-PC over $Z^{\mathrm{ms}}$ in the row.
LP-PC is not computed beyond $n=500$ (stress rows); the big-$M$ LP is
not computed beyond $n=120$.}
\label{tab:relax}
\begin{tabular}{lrrrrrrr}
\toprule
 & & \multicolumn{5}{c}{mean \% below best bound} &
   max gain (\%)\\
\cmidrule(lr){3-7}
Family & \# & big-$M$ LP & liq.\ LP & $Z^{\mathrm{ms}}$ & LP-PC &
$h^*$ & LP-PC vs $Z^{\mathrm{ms}}$\\
\midrule
CLASS 1 & 12 & 89.0 & 0.0 & 0.0 & 0.0 & 0.0 & 0.1 \\
CLASS 2 & 12 & 76.9 & 9.7 & 9.7 & 9.7 & 0.0 & 0.0 \\
CLASS 3 & 12 & 86.4 & 1.2 & 1.2 & 0.0 & 1.2 & 7.3 \\
CLASS 4 & 12 & 76.0 & 11.1 & 11.1 & 11.1 & 0.0 & 0.0 \\
CLASS 5 & 12 & 87.5 & 2.4 & 2.4 & 0.0 & 2.4 & 5.9 \\
CLASS 6 & 12 & 75.2 & 13.0 & 13.0 & 13.0 & 0.0 & 0.0 \\
CLASS 7 & 12 & 88.9 & 10.7 & 10.7 & 0.0 & 10.7 & 32.9 \\
CLASS 8 & 12 & 88.4 & 0.1 & 0.1 & 0.0 & 0.1 & 0.6 \\
CLASS 9 & 12 & 94.1 & 11.1 & 11.1 & 0.0 & 11.1 & 28.0 \\
CLASS 10 & 12 & 82.4 & 2.1 & 2.1 & 0.1 & 1.8 & 12.7 \\
T1 & 6 & -- & 0.0 & 0.0 & 0.0 & 0.0 & 0.0 \\
T2 & 6 & -- & 0.5 & 0.5 & 0.0 & 3.4 & 2.0 \\
ZDF & 6 & -- & 31.2 & 31.1 & 17.4 & 0.2 & 57.8 \\
ZDFS & 4 & -- & 90.1 & 90.1 & -- & 0.0 & -- \\
\bottomrule
\end{tabular}
\end{table}

Four observations stand out (Table~\ref{tab:relax}). First, the
proven relations hold throughout: on every instance the big-$M$ LP
never exceeds LP-PC (Proposition~\ref{prop:dominance}) and the
liquified LP never exceeds LP-PC (Proposition~\ref{prop:mmsp-lp}),
with zero violations; the big-$M$ LP trails the best bound by
$75$--$94\%$ on every class, the computational face of its missing
area accumulation.

Second, the comparison between the two incomparable refinements is
strongly asymmetric. On CLASS, LP-PC is strictly stronger than
$Z^{\mathrm{ms}}$ on 52 of 120 instances, equal on 50, and weaker on
18---and where $Z^{\mathrm{ms}}$ wins, its advantage never exceeds
$0.05\%$, whereas LP-PC's advantage reaches $32.9\%$ (class~7) and
$57.8\%$ (ZDF-derived). The integrality increment
$Z^{\mathrm{ms}}-Z^{\mathrm{ms}}_{\mathrm{LP}}$ is below $0.05\%$
everywhere: essentially the entire difference between the bounds is
\emph{geometry}, not integrality.

Third, the families organize into three regimes visible in
Table~\ref{tab:relax}: area-dominated classes (1, 8, and the Type~I
instances), where all portfolio members essentially coincide;
geometry classes (3, 5, 7, 9, 10 and Type~II), where LP-PC is the
unique best bound; and wide-strip classes (2, 4, 6) together with
the ZDF splits, where the trivial height bound $h^*$ dominates
\emph{both} LP-based bounds---by $17$--$26\%$ on average on classes
2, 4, 6---because the strip is wide relative to the items, the area
bound is weak, and a tallest item pins the makespan. On the stress
splits (last row) this regime is extreme: $Z^{\mathrm{ms}}$ trails
$h^*$ by $90\%$, so the only previously available bound would report
gaps of that order there.

Fourth, following the bound-portfolio practice standard in strip
packing \citep[\S5.1]{cote2014}, our dual bound is the maximum of the
three surviving members---LP-PC, $Z^{\mathrm{ms}}$, and $h^*$;
the dominance results of Section~\ref{sec:exact} guarantee that the
big-$M$ LP and the liquified LP need never be computed, and each
regime of the taxonomy above has one portfolio member that covers
it. All three bounds are computable in seconds ($h^*$ in $O(nm)$).

% ================= 6.4 Tier 2 =================
\subsection{Certified gaps of the matheuristic package}
\label{sec:sweep-exp}

Tables~\ref{tab:sweep-class} and~\ref{tab:sweep-sched} report the
certified gaps of the package under a $300$-second budget on every
instance of the study.

\begin{table}[h!]
\centering\small
\caption{Tier 2 --- certified optimality gaps of the matheuristic
package on the CLASS families ($300$\,s per instance, $10$ parallel
runs with distinct seeds; $12$ instances per cell).  Certified
gaps (in \%) are against the rounded-up bound portfolio; CV is the
coefficient of variation---std/mean, in \%---of the ten runs'
final objective values, averaged over the cell.}
\label{tab:sweep-class}
\begin{tabular}{lrrrrrrrr}
\toprule
 & \multicolumn{4}{c}{total area} & \multicolumn{4}{c}{makespan}\\
\cmidrule(lr){2-5}\cmidrule(lr){6-9}
Class & mean\,\% & max\,\% & \#opt & CV\,\% & mean\,\% & max\,\% & \#opt & CV\,\%\\
\midrule
CLASS 1 & 1.1 & 2.2 & 0 & 0.43 & 2.1 & 6.2 & 2 & 0.35 \\
CLASS 2 & 2.6 & 7.5 & 0 & 0.35 & 2.6 & 9.1 & 4 & 0.28 \\
CLASS 3 & 3.7 & 6.0 & 0 & 0.50 & 3.8 & 10.3 & 0 & 0.51 \\
CLASS 4 & 4.6 & 6.6 & 0 & 0.39 & 4.3 & 7.7 & 3 & 0.33 \\
CLASS 5 & 3.2 & 5.3 & 0 & 0.44 & 4.2 & 7.3 & 0 & 0.45 \\
CLASS 6 & 5.8 & 10.1 & 0 & 0.38 & 4.7 & 8.7 & 4 & 0.34 \\
CLASS 7 & 1.7 & 5.8 & 2 & 0.27 & 1.6 & 7.8 & 5 & 0.18 \\
CLASS 8 & 4.1 & 5.6 & 0 & 0.34 & 6.5 & 10.6 & 0 & 0.49 \\
CLASS 9 & 0.9 & 1.9 & 0 & 0.63 & 0.5 & 3.0 & 5 & 0.14 \\
CLASS 10 & 3.1 & 5.5 & 0 & 0.45 & 4.4 & 6.8 & 0 & 0.37 \\
\bottomrule
\end{tabular}
\end{table}

\begin{table}[h!]
\centering\small
\caption{Tier 2 --- certified gaps on the strip-dependent
(scheduling-lens) instances, $300$\,s each.  Starred rows are the
stress instances (LP-PC skipped; portfolio $= Z^{\mathrm{ms}},
h^*$).  A gap of $0.00$ is a proven optimum; CV as in
Table~\ref{tab:sweep-class}.}
\label{tab:sweep-sched}
\begin{tabular}{lrrrrrr}
\toprule
Instance & $n$ & $m$ & obj & $\lceil\mathrm{LB}\rceil$ &
gap\,\% & CV\,\%\\
\midrule
zdf1\_4 & 145 & 4 & 15 & 15 & 0.00 & 0.00 \\
T1\_150\_2 & 150 & 2 & 658 & 621 & 5.62 & 1.06 \\
T2\_150\_3 & 150 & 3 & 907 & 845 & 6.84 & 1.69 \\
T1\_150\_3 & 150 & 3 & 492 & 462 & 6.10 & 0.92 \\
T2\_150\_2 & 150 & 2 & 1747 & 1687 & 3.43 & 0.86 \\
T2\_250\_4 & 250 & 4 & 923 & 826 & 10.51 & 1.78 \\
T1\_250\_4 & 250 & 4 & 659 & 626 & 5.01 & 0.72 \\
zdf1\_2 & 290 & 2 & 29 & 27 & 6.90 & 2.17 \\
zdf2\_2 & 330 & 2 & 32 & 29 & 9.38 & 3.06 \\
zdf3\_2 & 370 & 2 & 49 & 49 & 0.00 & 0.00 \\
T2\_400\_4 & 400 & 4 & 1426 & 1234 & 13.46 & 1.82 \\
T1\_400\_4 & 400 & 4 & 1036 & 988 & 4.63 & 0.43 \\
zdf4\_2 & 410 & 2 & 38 & 36 & 5.26 & 3.68 \\
T1\_600\_4 & 600 & 4 & 1560 & 1495 & 4.17 & 0.24 \\
T2\_600\_4 & 600 & 4 & 2023 & 1696 & 16.16 & 1.74 \\
zdf6\_2 & 766 & 2 & 74 & 74 & 0.00 & 0.00 \\
T2\_800\_4 & 800 & 4 & 2617 & 2227 & 14.90 & 1.79 \\
T1\_800\_4 & 800 & 4 & 2056 & 1977 & 3.84 & 0.67 \\
zdf12\_8$^*$ & 1258 & 8 & 9 & 9 & 0.00 & 0.00 \\
zdf8\_2$^*$ & 1266 & 2 & 100 & 100 & 0.00 & 0.00 \\
zdf10\_4$^*$ & 1266 & 4 & 31 & 31 & 0.00 & 0.00 \\
zdf10\_2$^*$ & 2532 & 2 & 159 & 159 & 0.00 & 0.00 \\
\bottomrule
\end{tabular}
\end{table}

Three findings. First, quality is uniform
and reproducible: the mean certified gap is below $6.5\%$ on every
CLASS class under both objectives (overall means $3.1\%$ for total
area and $3.5\%$ for makespan), 32 of 262 instances are solved to
proven optimality, and the coefficient of variation across the ten
independent runs has median $0.36\%$---the certificate, not seed
luck, is the binding uncertainty.

Second, the certified gap inherits the regime structure of
Section~\ref{sec:relax-exp}: the largest gaps (classes 6 and 8, the
larger Type~II instances) occur where the dual portfolio is weakest,
not where the primal search degrades. The instances that BendM
closes in the equal-budget campaign of Section~\ref{sec:campaign}
make this decomposition observable, since there the optimum is
known. On the class-8 makespan instances it closes ($3$ of $12$),
the package's certified gap of $8.0\%$ resolves into a true gap of
$0.8\%$ and $7.2$ percentage points of dual slack; classes 1, 3,
and 5 repeat the pattern ($14$ closed instances in total, certified
gaps of $4.8$--$5.3\%$ against true gaps below $1\%$). Class~8 is
the sharpest case because it is also the family on which the whole
bound portfolio collapses to the area bound
(Table~\ref{tab:relax}): where that happens it is the bound, not
the search, that sets the certificate.

Third---the scale result---the four stress instances
(Table~\ref{tab:sweep-sched}, starred), with $n$ up to $2532$ and
$m$ up to $8$, are all solved to \emph{proven optimality} within the
same $300$-second budget: the package attains the height bound
$h^*$ exactly. We emphasize what makes this notable: these are
splits of Vasilyev et al.'s own source collection in the tall-item
regime, where the only previously available dual bound, in the pure
data-only form of \S\ref{sec:mmsp}, trails the optimum by $90\%$
(Table~\ref{tab:relax}); certificates there are
made possible by the portfolio, and the primal search reaches them.
We note candidly that $h^*$ being tight is a favorable feature of
this family; the point is that the certified framework detects and
exploits it automatically at a scale no exact method reaches.

% ================= 6.5 Tier 3 =================
\subsection{Equal-budget campaign: hierarchy and frontier}
\label{sec:campaign}

Table~\ref{tab:hierarchy} is an \emph{elimination} experiment: its
question is whether the formulation-level methods are competitive
at all, so it runs all four methods on a fixed subset chosen ex
ante to span the shape regimes---classes 1 and 3 (Berkey--Wang)
and 8 and 10 (Martello--Vigo), covering both the area-dominated
and the geometry regime of Table~\ref{tab:relax}---at the two
sizes where the answer could conceivably be yes. The verdict does
not depend on the subset, because the collapse it documents is
structural rather than class-specific; the surviving methods are
then compared exhaustively, on every instance of the study, in
Table~\ref{tab:frontier}. The plain big-$M$ formulation collapses
exactly as Proposition~\ref{prop:dominance} predicts, confirming
quantitatively the assessment of \citet{vasilyev2023} that it is
tractable only at very small sizes: a $22\%$ mean terminal
gap already at $n=20$, degrading to $82\%$ at $n=60$, with not a
single instance closed at $n=60$. BigM-LE repairs the \emph{bound}
(terminal gaps drop to $3.6\%$ and $9.2\%$) but closes no additional
instances: injecting LP-PC substitutes for the missing relaxation
strength but cannot substitute for branching structure. BendM
dominates the table---$24/32$ optima at $n=20$ at a mean time of
$364$\,s---and is the only method whose gap is routinely zero.

\begin{table}[h!]
\centering\small
\caption{Tier 3 --- elimination experiment: the method hierarchy
at small size (classes
1, 3, 8, 10; both objectives; $32$ instances per row; $T=1200$\,s
each).  The terminal gap decomposes into its two sides: the primal
pair compares each method's incumbent, and the dual pair its
terminal lower bound, with the best of the four methods; \#best
counts instances at that best, $\Delta$ is the mean relative
deviation from it.}
\label{tab:hierarchy}
\begin{tabular}{llrrrrrrr}
\toprule
 & & & & \multicolumn{2}{c}{primal (incumbent)} &
\multicolumn{2}{c}{dual (bound)} & \\
\cmidrule(lr){5-6}\cmidrule(lr){7-8}
$n$ & Method & opt & gap\,\% & \#best & $\Delta$\,\% &
\#best & $\Delta$\,\% & time (s)\\
\midrule
20 & BigM & 5/32 & 22.30 & 12/32 & 0.87 & 5/32 & 21.34 & 1017 \\
 & BigM-LE & 5/32 & 3.57 & 15/32 & 1.14 & 8/32 & 1.81 & 1024 \\
 & BendM & 24/32 & 0.69 & 31/32 & 0.00 & 32/32 & 0.00 & 364 \\
 & Package & 0/32 & 4.49 & 12/32 & 0.89 & 3/32 & 2.96 & 1101 \\
\midrule
60 & BigM & 0/32 & 81.66 & 0/32 & 11.21 & 0/32 & 79.38 & 1202 \\
 & BigM-LE & 0/32 & 9.15 & 0/32 & 8.80 & 16/32 & 0.07 & 1202 \\
 & BendM & 6/32 & 1.12 & 32/32 & 0.00 & 31/32 & 0.00 & 978 \\
 & Package & 0/32 & 3.42 & 2/32 & 2.34 & 17/32 & 0.07 & 1102 \\
\bottomrule
\end{tabular}
\end{table}

Two reading notes. First, the primal/dual columns decompose each
terminal gap into its two sides, and the decomposition is
instructive. BigM-LE's repair is purely dual: it lifts the bound
side from $21.3\%$ to $1.8\%$ below the best bound (to near-parity
at $n=60$) while its incumbents are no better---at $n=20$ slightly
worse---than plain big-$M$'s: certification, not solutions, in its
sharpest form (cf.\ \S\ref{sec:anatomy}). The package's binding
side flips with size: at $n=20$ its incumbents are near-best
($0.89\%$) and its gap is mostly bound slack ($2.96\%$), whereas at
$n=60$ its portfolio bound is essentially tied with BendM's
($0.07\%$) and the gap has become primal ($2.34\%$). BendM leads
on both sides throughout. Second,
the package's zero optima in Table~\ref{tab:hierarchy} are an
artifact of the class subset: its proven optima concentrate in
classes 2, 4, 6, and 7---where $h^*$ or LP-PC is tight
(Table~\ref{tab:relax})---none of which belong to the hierarchy
subset $\{1,3,8,10\}$; over all ten classes it proves $13$ optima
at $n=20$ (Table~\ref{tab:frontier}).

The hierarchy is the computational argument for what we call an
exact method here: the contribution is not the model but the
decomposition machinery around it.

\begin{table}[h!]
\centering\small
\caption{Tier 3 --- the exact/matheuristic frontier: BendM vs the
package under a common budget of $1200$\,s.  ``Duel'' counts
instances whose best solution is strictly better under
BendM\,/\,tied\,/\,strictly better under the package.  On the stress
band BendM returned no feasible solution on three of the four
instances and no valid lower bound on any (gap reported as $100$);
building its master alone exceeded the budget several-fold.}
\label{tab:frontier}
\begin{tabular}{lrrrrr}
\toprule
 & \multicolumn{2}{c}{BendM} & \multicolumn{2}{c}{Package} & \\
\cmidrule(lr){2-3}\cmidrule(lr){4-5}
Band & opt & gap\,\% & opt & gap\,\% & Duel (B/t/P)\\
\midrule
CLASS $n=20$ & 58/80 & 1.02 & 13/80 & 3.52 & 37/43/0 \\
CLASS $n=60$ & 16/80 & 1.66 & 1/80 & 3.23 & 60/17/3 \\
CLASS $n=100$ & 13/80 & 1.68 & 4/80 & 2.97 & 58/15/7 \\
sched.\ $n=145$--$800$ & 2/18 & 2.93 & 3/18 & 6.90 & 15/3/0 \\
stress $n=1258$--$2532$ & 0/4 & 100.00 & 4/4 & 0.00 & 0/0/4 \\
\bottomrule
\end{tabular}
\end{table}

Table~\ref{tab:frontier} extends the comparison to the full range.
The verdict is unambiguous, and it was not the one we anticipated
when designing the study: \emph{BendM remains the strongest method
throughout the range in which its master can be built}, winning the
primal comparison on $170$ of the $258$ instances up to $n=800$
($78$ ties, $10$ package wins) and delivering the tighter certified
gap in every band, with terminal gaps of only $1.1$--$1.6\%$ on the
Type~I/II instances as large as $n=800$. The package tracks it
within $1.3$--$2.5$ percentage points of certified gap on CLASS and
matches its incumbent on roughly half of the small instances---but
where BendM proves optimality, the package can typically only
certify to within the LP-PC slack. The primal deficit has a clean
attribution: since the endgame certifies that the package's
per-strip packings are optimal for its final assignment
(Section~\ref{sec:ablation}), whatever separates the two
incumbents lies at the \emph{assignment} level---at these sizes,
BendM's branch-and-cut explores assignments more effectively than
neighborhood search does.

One mid-size split (zdf6,
$n=766$) illustrates the converse: both methods find the same
solution, but only the package proves it optimal, because its
certificate draws on the bound portfolio ($h^*$ is tight there)
while BendM's internal bound does not.
The frontier itself is located by the stress band: at
$n\ge 1258$, BendM produced no feasible solution on three of the
four instances and no valid lower bound on any---building the master
alone consumed several times the budget---while the package proved
optimality on all four. The hand-off is governed less by $n$ alone
than by the size of the master---roughly the number of normal
positions times columns, which grows with both $n$ and the strip
widths. The generator instances at $n=800$ have widths below $150$
and remain well within reach; the stress splits combine
$n\ge 1258$ with widths of $3000$--$6000$, inflating the master by
two orders of magnitude rather than by the factor $1.6$ that item
count alone would suggest. The mid-size split zdf6 ($n=766$,
$W=3000$) sits visibly at the edge: BendM still builds its master
and matches the package's solution, but its bound already lags.
The exact/matheuristic hand-off thus occurs where the master
outgrows the budget---a frontier in model size, which our
instances happen to cross between $n=800$ and $n=1258$.

Two refinements temper the verdict. By objective, BendM's advantage
concentrates on total area, where its branch-and-cut closes the
dual side almost completely (mean terminal gap $1.1$--$1.4\%$,
$21/40$ optima at $n=20$); under the makespan the two methods are
already near parity at $n=100$ ($2.3\%$ vs $3.4\%$ mean certified
gap), consistent with the plateau structure of min--max objectives
that weakens incumbent pruning. And by scale, the package is the
only method of the study that operates at the stress sizes at all,
where it delivers proven optima (Section~\ref{sec:sweep-exp}).
The two contributions are thus complementary in a precise,
measured sense: the rebuilt Benders machinery is the method of
choice wherever it runs; the certified package extends quality
guarantees beyond that frontier.

% ================= 6.6 Component anatomy =================
\subsection{Reading the comparison component by component}
\label{sec:anatomy}

The four methods sample the design space of GMSPP algorithms along
two axes---how the assignment level is searched, and how the
packing level is handled---summarized in Table~\ref{tab:anatomy}.
Two lessons emerge when the results of
Tables~\ref{tab:hierarchy}--\ref{tab:frontier} are read against
this anatomy.

\begin{table}[h!]
\centering\small
\caption{How each method decomposes the GMSPP, and where its
implementation effort goes.}
\label{tab:anatomy}
\begin{tabular}{lllll}
\toprule
Method & Assignment level & Packing level & Dual bound & Effort\\
\midrule
BigM & \multicolumn{2}{c}{\multirow{2}{*}{monolithic MIP, no level
  separation}} & own LP (degenerate) & minimal\\
BigM-LE & \multicolumn{2}{c}{} & injected LP-PC & small\\
BendM & master B\&C & split: $x$ in master, & master bound &
  substantial\\
 & (dual-guided) & $y$-check at fixed $x$ ($\mu$s) & ($\ge$ LP-PC)
 & \\
Package & ALNS + LS & joint $(x,y)$: fast packer & portfolio &
  substantial\\
 & (evaluation-guided) & in-loop, CP-SAT once & (LP-PC, $Z^{\mathrm{ms}}$, $h^*$) & \\
\bottomrule
\end{tabular}
\end{table}

First, \emph{performance follows structural effort, not
formulation effort}. The two formulation-level methods differ from
the two engineered methods by orders of magnitude in
implementation labor, and the gap in results matches; but BigM-LE
is the instructive control: injecting the strong bound repairs the
big-$M$ model's terminal gap while leaving its ability to close
instances unchanged, so effort invested at the formulation level
buys certification, not solutions. The effort that pays is
structural---a decomposition, or a search architecture.

Second, between the two engineered methods the effort is
comparable and the differences are purely architectural, which
localizes the verdict of Table~\ref{tab:frontier}. BendM splits
the packing level ($x$ chosen by the master, $y$ delegated to a
microsecond decision oracle) and searches assignments with
\emph{dual-guided} branch-and-cut: the master optimizes assignments
directly against the LP-PC geometry, and every cut permanently
adds global infeasibility knowledge. The package treats each
strip's packing jointly and more exhaustively per call---but
invoked far less often---and searches assignments by
\emph{evaluation-guided} neighborhood moves, with no dual
information. In hindsight the outcome is natural: the packing
level, which the package handles at least as well
(Section~\ref{sec:ablation}), was never where the game was decided;
the assignment level was, and there dual guidance beats objective
evaluations wherever the master can be built.

The package's compensating asset is structural: none of its
components ever materializes a model of the whole instance. The
packer and the neighborhood moves operate on incremental per-strip
data structures, the endgame sees one strip at a time, and the
bounds that remain available at scale ($Z^{\mathrm{ms}}$ and
$h^*$) are a small MIP and an $O(nm)$ scan. Every model-based
method, by contrast, must pay for a position-indexed model of the
entire instance---hundreds of thousands of binaries with dense
column-load constraints at the stress sizes---\emph{before}
returning anything. This is why the frontier of
Table~\ref{tab:frontier} is a cliff rather than a slope: BendM is
the best method in the study at $n=800$ (strip widths $\le 150$)
and returns neither a solution nor a bound on the stress splits
($n\ge 1258$, widths $3000$--$6000$), whose masters are two orders
of magnitude larger. Once an instance is too large to
model, search with certificates is not merely competitive with
model-based optimization; it is the only approach that produces
any quality statement at all.

% ================= 6.7 Phase analysis =================
\subsection{Where the package spends its power}
\label{sec:ablation}

Three diagnostics justify the internal design of the package and its
budget allocation. First, the exact endgame improved no strip on
\emph{any} instance of the study---CLASS, scheduling, stress, and
the zero-waste family alike, at budgets of both $60$ and $300$
seconds (\appref{app:zerowaste}): the per-strip packings
produced by the skyline packer and local search are already optimal,
and CP-SAT \emph{proves} this in $0$--$3$ seconds on typical strips.
The endgame's measured role is therefore certification and
insurance---it guarantees, at a constant cost of $30$\,s, that the
final packing quality is independent of the heuristic packer---and
this is itself a finding: a strong skyline packer plus local search
closes the packing level heuristically.

Second, the local-search
phase carries the late
improvements at scale: on instances with $n>100$ it contributes a
mean $3.0\%$ (up to $6.5\%$) improvement over the best ALNS
solution, against $0.4\%$ below $n=100$. Third, because the endgame
saturates, the time allocation gives it a constant small budget
($30$\,s) and assigns all remaining time to the assignment-level
search, whose simulated-annealing schedule cools geometrically in
wall-clock time and therefore adapts to any budget without
retuning.

\section{Conclusion}\label{sec:conclusion}

We set out to bridge two literatures: the mature exact toolbox of
single-strip packing, and the heuristic-only state of the art for
multiple heterogeneous strips with strip-dependent item dimensions.
The bridge required genuine reconstruction rather than translation:
a variable-height master with $H_i$-aware conditional Benders cuts
(proven valid), a packing-aware LP bound with height-linking
inequalities (proven to dominate both previously available LP
relaxations), and a bound portfolio whose three members provably and
empirically cover complementary instance regimes.

The comparative study delivers a map we believe will be useful to
anyone approaching the problem next. The plain big-$M$ formulation,
stated but never evaluated before, is not a viable exact method: it
collapses at $n=20$ and is off by $80\%$ at $n=60$; injecting the
LP-PC bound repairs its terminal gap but not its ability to close
instances. The rebuilt Benders decomposition is the method of
choice everywhere its master can be built---it wins the primal and
the certified comparison on every family up to $n=800$, under both
objectives, and supplies the first proven optima for the GMSPP.

Its advantage is largest under the total-area objective and
narrows to near parity under the makespan, whose min--max plateau
blunts incumbent pruning. Beyond the exact frontier, the certified
matheuristic package is the only method standing, and the bound
portfolio keeps it honest: at $n$ up to $2532$ and $m=8$ it delivers
proven optima on splits of the very collection on which the
previously available bound is off by $90\%$. The two contributions
are complementary in a measured, not rhetorical, sense.

Three questions remain open, and together they mark the boundary of
what this paper claims. On the dual side, the slack that survives is
concentrated rather than diffuse: Section~\ref{sec:sweep-exp} places
the largest certified gaps on the narrow-tall CLASS family (class~8)
and on the larger Type~II instances, and shows that on the former it
is the bound, not the search, that sets the certificate. The
strengthenings the single-strip literature would suggest do not
reach that regime. Dual-feasible functions and conservative scales
\citep{fekete2004,carlier2007} act on the ratio of item width to
container width, and on a family of narrow items that ratio stays
below one half almost everywhere, which leaves such a scale equal to
the identity; their extension to multiple heterogeneous strips is
open in any case, since the ratio is itself strip-dependent and the
resulting per-strip bounds have to be recombined through the
assignment. The binding question is instead on the height side,
where the linking inequalities \eqref{eq:link} already act---but one
item at a time. Certifying the joint height of a \emph{subset} of
items, complementing the column loads rather than duplicating them,
is the natural next step. On the primal side,
Section~\ref{sec:campaign} attributes the package's entire remaining
deficit to the assignment level; exact components acting at that
level---exact reinsertion of a destroyed assignment, or compound
relocation moves such as ejection chains---lay outside the scope of
a study designed to compare architectures rather than to tune one,
and \appref{app:zerowaste} indicates what they would have to
overcome: on jigsaw-like instances the search stays $3$--$8\%$ from
the optimum even though the endgame certifies each strip
individually. On the modeling side, orthogonal rotations and
guillotine restrictions, both excluded here in line with the
oriented, free-cutting setting of \citet{cote2014} and
\citet{vasilyev2023}, are natural extensions of the model.

\section*{Acknowledgments}

The authors thank Jean-Fran\c cois C\^ot\'e for generously sharing
the C source code of his y-check solver, which greatly informed the
design of our implementation.

\section*{Data and code availability}

All instances used in this study---the CLASS, Type~I/II, ZDF-split,
and zero-waste conversions, together with the deterministic
generation and conversion protocols---and the source code of all our
components (formulations, LP-PC and the bound portfolio, BendM
master and cuts, and the matheuristic package) are publicly
released at
\url{https://github.com/HyunwooLee0429/GMSPP}, together with the
per-instance result files underlying every table of
Section~\ref{sec:experiments}. The provenance of the y-check
component is documented in Section~\ref{sec:bendm}.

\bibliography{refs}

\appendix
\section{Zero-waste family: stress test and endgame ablation}
\label{app:zerowaste}

The zero-waste instances of \citet{burke2004} are excluded from the
main comparisons for the reasons given in
Section~\ref{sec:instances}; here they serve two calibration roles.

\begin{table}[h!]
\centering\small
\caption{Zero-waste family: endgame on/off ablation (paired runs,
identical seed) and true-gap calibration.  $\Delta$ is the objective
change from disabling the endgame, averaged over both objectives and
$m\in\{2,3\}$ ($12$ pairs per row); positive would favor the
endgame.  All differences are within seed noise, and the endgame
improved no strip in any run.  The last column is the mean
\emph{true} optimality gap of the package under total area
($T=300$\,s), computable exactly here because the area bound equals
the optimum on zero-waste instances.}
\label{tab:nzw}
\begin{tabular}{lrrrrrr}
\toprule
 & & \multicolumn{2}{c}{$T=60$\,s} & \multicolumn{2}{c}{$T=300$\,s}
 & true gap\,\%\\
\cmidrule(lr){3-4}\cmidrule(lr){5-6}
Class & $n$ & mean $\Delta\%$ & max $|\Delta|\%$ &
mean $\Delta\%$ & max $|\Delta|\%$ & (total area)\\
\midrule
N1 & 17 & -0.18 & 2.70 & -0.34 & 1.39 & 7.25 \\
N2 & 25 & -0.02 & 1.94 & -0.09 & 0.47 & 7.54 \\
N3 & 29 & -0.11 & 1.12 & +0.00 & 0.00 & 6.55 \\
N4 & 49 & +0.17 & 1.82 & +0.12 & 1.12 & 6.08 \\
N5 & 73 & -0.00 & 2.04 & -0.00 & 0.38 & 5.43 \\
N6 & 97 & +0.26 & 1.38 & -0.13 & 2.08 & 5.15 \\
N7 & 197 & +0.13 & 1.53 & +0.05 & 0.96 & 3.24 \\
\bottomrule
\end{tabular}
\end{table}

First, an ablation of the exact endgame
(Table~\ref{tab:nzw}). On paired runs with identical seeds, at both
a short ($60$\,s) and the standard ($300$\,s) budget, disabling the
endgame changes the final objective only within seed noise (mean
effects of $\pm 0.1\%$ against a $10$-seed coefficient of variation
of $0.3$--$0.5\%$), and the endgame improved no strip in any of the
$336$ runs. Together with the probes of
Section~\ref{sec:ablation}, this settles the attribution question:
Phases~1--2 close the packing level heuristically on every family
we test, and the endgame's operational value is the per-strip
optimality certificate and the guarantee that this closure does not
depend on the packer---not net improvement.

Second, a true-gap calibration. On zero-waste instances the area
bound coincides with the optimum under the total-area objective, so
the certified gap \emph{is} the true optimality gap. The last
column of Table~\ref{tab:nzw} shows the package's true gaps of
$3$--$8\%$ on this jigsaw-like family, decreasing with size and
largest on the smallest, most puzzle-like instances---the hardest
regime for neighborhood search, as anticipated in
Section~\ref{sec:conclusion}.

\end{document}